\definecolor{egyptianblue}{rgb}{0.06, 0.2, 0.65}
\definecolor{MHcol}{RGB}{16, 195, 235}
\let\old@tocline\@tocline
\let\section@tocline\@tocline
\newcommand{\subsection@dotsep}{4.5}
\newcommand{\subsubsection@dotsep}{4.5}
     \leaders\hbox{$\m@th
        \mkern \subsection@dotsep mu\hbox{.}\mkern \subsection@dotsep mu$}\hfill
\let\subsection@tocline\@tocline
\let\@tocline\old@tocline
     \leaders\hbox{$\m@th
        \mkern \subsubsection@dotsep mu\hbox{.}\mkern \subsubsection@dotsep mu$}\hfill
\let\subsubsection@tocline\@tocline
\let\@tocline\old@tocline
\let\old@l@subsection\l@subsection
\let\old@l@subsubsection\l@subsubsection
\def\@tocwriteb#1#2#3{%
  \begingroup
    \@xp\def\csname #2@tocline\endcsname##1##2##3##4##5##6{%
      \ifnum##1>\c@tocdepth
      \else \sbox\z@{##5\let\indentlabel\@tochangmeasure##6}\fi}%
    \csname l@#2\endcsname{#1{\csname#2name\endcsname}{\@secnumber}{}}%
  \endgroup
  \addcontentsline{toc}{#2}%
    {\protect#1{\csname#2name\endcsname}{\@secnumber}{#3}}}%
\newlength{\@tocsectionindent}
\newlength{\@tocsubsectionindent}
\newlength{\@tocsubsubsectionindent}
\newlength{\@tocsectionnumwidth}
\newlength{\@tocsubsectionnumwidth}
\newlength{\@tocsubsubsectionnumwidth}
\newcommand{\settocsectionnumwidth}[1]{\setlength{\@tocsectionnumwidth}{#1}}
\newcommand{\settocsubsectionnumwidth}[1]{\setlength{\@tocsubsectionnumwidth}{#1}}
\newcommand{\settocsubsubsectionnumwidth}[1]{\setlength{\@tocsubsubsectionnumwidth}{#1}}
\newcommand{\settocsectionindent}[1]{\setlength{\@tocsectionindent}{#1}}
\newcommand{\settocsubsectionindent}[1]{\setlength{\@tocsubsectionindent}{#1}}
\newcommand{\settocsubsubsectionindent}[1]{\setlength{\@tocsubsubsectionindent}{#1}}
\renewcommand{\l@section}{\section@tocline{1}{\@tocsectionvskip}{\@tocsectionindent}{}{\@tocsectionformat}}%
\renewcommand{\l@subsection}{\subsection@tocline{2}{\@tocsubsectionvskip}{\@tocsubsectionindent}{}{\@tocsubsectionformat}}%
\renewcommand{\l@subsubsection}{\subsubsection@tocline{3}{\@tocsubsubsectionvskip}{\@tocsubsubsectionindent}{}{\@tocsubsubsectionformat}}%
\newcommand{\@tocsectionformat}{}
\newcommand{\@tocsubsectionformat}{}
\newcommand{\@tocsubsubsectionformat}{}
\def\csname toc@1format\endcsname{\@tocsectionformat}
\def\csname toc@2format\endcsname{\@tocsubsectionformat}
\def\csname toc@3format\endcsname{\@tocsubsubsectionformat}
\newcommand{\settocsectionformat}[1]{\renewcommand{\@tocsectionformat}{#1}}
\newcommand{\settocsubsectionformat}[1]{\renewcommand{\@tocsubsectionformat}{#1}}
\newcommand{\settocsubsubsectionformat}[1]{\renewcommand{\@tocsubsubsectionformat}{#1}}
\newlength{\@tocsectionvskip}
\newcommand{\settocsectionvskip}[1]{\setlength{\@tocsectionvskip}{#1}}
\newlength{\@tocsubsectionvskip}
\newcommand{\settocsubsectionvskip}[1]{\setlength{\@tocsubsectionvskip}{#1}}
\newlength{\@tocsubsubsectionvskip}
\newcommand{\settocsubsubsectionvskip}[1]{\setlength{\@tocsubsubsectionvskip}{#1}}
\patchcmd{\tocsection}{\indentlabel}{\makebox[\@tocsectionnumwidth][l]}{}{}
\patchcmd{\tocsubsection}{\indentlabel}{\makebox[\@tocsubsectionnumwidth][l]}{}{}
\patchcmd{\tocsubsubsection}{\indentlabel}{\makebox[\@tocsubsubsectionnumwidth][l]}{}{}
\newcommand{\@sectypepnumformat}{}
\renewcommand{\contentsline}[1]{%
  \expandafter\let\expandafter\@sectypepnumformat\csname @toc#1pnumformat\endcsname%
  \csname l@#1\endcsname}
\newcommand{\@tocsectionpnumformat}{}
\newcommand{\@tocsubsectionpnumformat}{}
\newcommand{\@tocsubsubsectionpnumformat}{}
\newcommand{\setsectionpnumformat}[1]{\renewcommand{\@tocsectionpnumformat}{#1}}
\newcommand{\setsubsectionpnumformat}[1]{\renewcommand{\@tocsubsectionpnumformat}{#1}}
\newcommand{\setsubsubsectionpnumformat}[1]{\renewcommand{\@tocsubsubsectionpnumformat}{#1}}
\renewcommand{\@tocpagenum}[1]{%
  \hfill {\mdseries\@sectypepnumformat #1}}
\let\oldappendix\appendix
\renewcommand{\appendix}{%
  \leavevmode\oldappendix%
  \addtocontents{toc}{%
    \protect\settowidth{\protect\@tocsectionnumwidth}{\protect\@tocsectionformat\sectionname\space}%
    \protect\addtolength{\protect\@tocsectionnumwidth}{2em}}%
}
\let\oldtableofcontents\tableofcontents
\renewcommand{\tableofcontents}{%
  \vspace*{-\linespacing}
  \oldtableofcontents}
\tikzset{
  mid arrow/.style={
    decoration={
      markings,
      mark=at position #1 with {\arrow[sloped, scale=1.2]{Stealth}} 
    },
    postaction={decorate}
  },
  mid arrow/.default=0.5,
  mycurve/.style={smooth, tension=0.3}
}
\tikzset{anchorbase/.style={baseline={([yshift=-0.5ex]current bounding box.center)}}}
\tikzstyle directed=[postaction={decorate,decoration={markings,
    mark=at position #1 with {\arrow{>}}}}]
\tikzset{cross/.style={cross out, draw=black, minimum size=2*(#1-\pgflinewidth), inner sep=0pt, outer sep=0pt},
cross/.default={1pt}}
\tikzset{
    partial ellipse/.style args={#1:#2:#3}{
        insert path={+ (#1:#3) arc (#1:#2:#3)}
    }
}
\newcommand{\bC}{\mathbb{C}}
\newcommand{\bL}{\mathbb{L}}
\newcommand{\bP}{\mathbb{P}}
\newcommand{\bQ}{\mathbb{Q}}
\newcommand{\bR}{\mathbb{R}}
\newcommand{\bZ}{\mathbb{Z}}
\newcommand{\bT}{\mathbb{T}}
\newcommand{\bfT}{\mathbf{T}}
\newcommand{\fg}{\mathfrak{g}}
\DeclareMathOperator{\Hom}{Hom}
\newcommand{\R}{\mathbb{R}}
\newcommand{\Ad}{\mathrm{Ad}}
\newcommand{\Sk}{\mathrm{Sk}}
\newcommand{\fS}{\mathfrak{S}}
\newcommand{\Exp}[1]{
\mathcal{E}\left(#1\right) 
}
\newcommand{\Expp}[1]{
\mathcal{E}' \left(#1\right) 
}
\renewcommand{\exp}[1]{
\mathrm{exp}\left(#1\right)
} 
\newtheorem{dummy}{dummy}[section]
\newtheorem{lemma}[dummy]{Lemma}
\newtheorem{theorem}[dummy]{Theorem}
\newtheorem{corollary}[dummy]{Corollary}
\newtheorem{proposition}[dummy]{Proposition}
\theoremstyle{definition}
\newtheorem{definition}[dummy]{Definition}
\newtheorem{example}[dummy]{Example}
\newtheorem{remark}[dummy]{Remark}
\title[Skein-valued mirror curves for toric CY3 strips]{Skein-valued mirror curves for toric CY3 strips}
\author{Mingyuan Hu and Vivek Shende}
\date{}
\begin{document}

\begin{abstract}
    For a smooth semi-projective toric Calabi-Yau 3-fold containing no compact surface, we show  the count of all-genus holomorphic curves with boundary on a single Aganagic-Vafa brane is annihilated by a skein-valued quantization of the mirror curve, and that this determines the count.  We give explicit expressions for  the equation and its solution.
\end{abstract}

\maketitle

\vspace{-5mm}

\thispagestyle{empty}

\section{Introduction}

There are at least three approaches to counting holomorphic curves in a toric Calabi-Yau 3-fold.  
The first is a gluing formula for piecing together global invariants from the topological vertex, which counts  holomorphic curves in  $\mathbb{C}^3$ charts \cite{AKMV, Okounkov-Reshetikhin-Vafa, MNOP, LLLZ, MOOP}.  A second is topological recursion, which is a recursive procedure for constructing various differential forms on the mirror curve, whose  integrals recover the holomorphic curve invariants \cite{Bouchard-Klemm-Marino-Pasquetti, Eynard-Orantin, Fang-Liu-Zong}. 
Finally, in certain cases, quantizing the mirror curve gives an operator equation which is believed to annihilate the count of holomorphic curves with boundary ending on a single rank-1 brane \cite{Aganagic-Vafa, ADKMV, AV2,  AENV, Gukov-Sulkowski, Zhou-quantum-mirror, Banerjee-Hock}.  

In the string theory literature, these different approaches each have their own independent justifications.  By contrast, for the first 20 years of mathematical study of such questions, the proofs that the second two approaches in fact give holomorphic curve counts ultimately proceeded by reducing to the topological vertex.  

Recently, a new technique has been introduced which allows an {\em a priori}  derivation of a `skein quantized' mirror, which {\em a priori} annihilates the full open partition function and {\em a priori} dequantizes to the mirror curve \cite{Ekholm-Shende-unknot}.  
Previous applications include \cite{Ekholm-Shende-unknot,Ekholm-Shende-colored,SS24, Ekholm-Longhi-Nakamura, HSZ}, and, notably, a derivation topological vertex itself \cite{ELS}.  
The basic idea is the following: one studies a one-dimensional moduli space of holomorphic curves with boundary on the brane of interest and one positive puncture ``at infinity''.  The boundary of this moduli space is, on the one hand, zero in homology, and, on the other, can expressed in terms only of (1) curves ``at infinity'' -- which are, in examples, relatively easy to determine -- and -- at least in especially fortunate cases -- (2) the desired holomorphic curve count.  This translates into a recursive formula for the counts, which, when packaged in the skein-valued curve-counting formalism of \cite{Ekholm-Shende-skeins}, gives an operator equation.

Here we study smooth semi-projective toric Calabi-Yau 3-folds containing no surfaces.  We give a skein quantization of the mirror, and solve explicitly the corresponding operator equation, in  Theorem \ref{solving the recursion} below.  We show that this equation is given by a count of curves at infinity -- hence annihilates the skein-valued count of curves ending on a filling -- in Theorem \ref{thm:counting at infinity}.   Finally, we give in Theorem \ref{thm: topological vertex for strips}  an independent argument that our solution to the skein recursion  agrees with the result of a topological vertex calculation. 

\vspace{2mm}
{\bf Acknowledgements.}
We thank Tobias Ekholm, Melissa Liu, Siyang Liu, Adrian Petr,  and Eric Zaslow for helpful discussions.
This work was supported by the Villum Fonden grant Villum Investigator 37814.




\renewcommand{\contentsname}{}
\tableofcontents

\section{A skein identity}


\subsection{Skein of the solid torus} 

Here we briefly recall some facts about the HOMFLYPT skein, and in particular about the skein of the solid torus.

For an oriented 3-manifold $M$, its HOMFLYPT skein $\Sk(M)$ is the $\mathbb{Z}[a^\pm, z^\pm]$-module generated by framed links in $M$, modulo the following relations: 
\begin{align}
\vcenter{\hbox{
\begin{tikzpicture}[scale=0.7]
\draw[dotted] (0,0) circle (1);
\draw[ultra thick, ->] ({sqrt(2)/2},{-sqrt(2)/2}) -- ({-sqrt(2)/2},{sqrt(2)/2});
\draw[white, line width=2.5mm] ({-sqrt(2)/2},{-sqrt(2)/2}) -- ({sqrt(2)/2},{sqrt(2)/2});
\draw[ultra thick, ->] ({-sqrt(2)/2},{-sqrt(2)/2}) -- ({sqrt(2)/2},{sqrt(2)/2});
\end{tikzpicture}
}}
\;\;-\;\;
\vcenter{\hbox{
\begin{tikzpicture}[scale=0.7]
\draw[dotted] (0,0) circle (1);
\draw[ultra thick, ->] ({-sqrt(2)/2},{-sqrt(2)/2}) -- ({sqrt(2)/2},{sqrt(2)/2});
\draw[white, line width=2.5mm] ({sqrt(2)/2},{-sqrt(2)/2}) -- ({-sqrt(2)/2},{sqrt(2)/2});
\draw[ultra thick, ->] ({sqrt(2)/2},{-sqrt(2)/2}) -- ({-sqrt(2)/2},{sqrt(2)/2});
\end{tikzpicture}
}}
\;\;&=\;\;
z\;
\vcenter{\hbox{
\begin{tikzpicture}[scale=0.7]
\draw[dotted] (0,0) circle (1);
\draw[ultra thick, <-] ({sqrt(2)/2},{sqrt(2)/2}) arc (135:225:1);
\draw[ultra thick, ->] ({-sqrt(2)/2},{-sqrt(2)/2}) arc (-45:45:1);
\end{tikzpicture}
}}
\;, \label{eq:skeinrel1}
\\
a
\vcenter{\hbox{
\begin{tikzpicture}[scale=0.7]
\draw[dotted] (0,0) circle (1);
\end{tikzpicture}
}}
\;-\;
a^{-1}
\vcenter{\hbox{
\begin{tikzpicture}[scale=0.7]
\draw[dotted] (0,0) circle (1);
\end{tikzpicture}
}}
\;\;&=\;\;
z\;\vcenter{\hbox{
\begin{tikzpicture}[scale=0.7]
\draw[dotted] (0,0) circle (1);
\draw[ultra thick, ->] (0.5,0) arc (0:370:0.5);
\end{tikzpicture}
}}
\;, \label{eq:skeinrel2}
\\
\vcenter{\hbox{
\begin{tikzpicture}[scale=0.25]
\draw[dotted] (0, 0) circle (3);
\draw [ultra thick] (1,-1) to [out=180,in=-90] (0,0);
\draw [ultra thick, ->] (0,0) -- (0,3);
\draw [ultra thick] (1,1) to [out=0,in=90] (2,0) to [out=-90,in=0] (1,-1);
\draw [white, line width=2.5mm] (0,-3) to [out=90,in=-90] (0,0) to [out=90,in=180] (1,1);
\draw [ultra thick] (0,-3) to [out=90,in=-90] (0,0) to [out=90,in=180] (1,1);
\end{tikzpicture}
}}
\;\;&=\;\;
a\;
\vcenter{\hbox{
\begin{tikzpicture}[scale=0.25]
\draw[dotted] (0, 0) circle (3);
\draw[ultra thick, <-] (0, 3) -- (0, -3);
\end{tikzpicture}
}}
\;. \label{eq:skeinrel3}
\end{align}
The existence of the HOMFLYPT invariant of knots is equivalent to the fact that the map sending $1$ to the empty link is an isomorphism: $\mathbb{Z}[a^{\pm}, z^{\pm}] \xrightarrow{\sim} \Sk(S^3)$.

For a surface $S$, we abbreviate $\Sk(S) := \Sk(S \times \mathbb{R})$.  Concatening in the $\mathbb{R}$ factor gives $\Sk(S)$ an algebra structure.  If $M$ is a manifold with boundary, there is similarly an action of $\Sk(\partial M)$ on $\Sk(M)$.  

In this article we will be concerned with the skeins of solid tori $\mathbf{T}$ and their boundaries $\partial \mathbf{T}$.  We will always fix (later determined by the ambient geometry) a choice of orientation of the longitude $\ell$ of the solid torus.  We choose the oriented meridian $m \in \partial \mathbf{T}$ such that the linking number of $m, \ell$ is $+1$.  We will always fix a lift of $\ell$ to $\partial \mathbf{T}$ (this corresponds to the `framing' in discussions of the topological vertex); the space of such lifts is a $\mathbb{Z}$-torsor.  These data determine an embedding $\mathbf{T} \to S^3$, hence a corresponding map 
$\langle\, \cdot\, \rangle_{S^3}: \Sk(\mathbf{T}) \to \Sk(S^3) = \mathbb{Z}[a^{\pm}, z^{\pm}]$.  
These choices induce an identification of the solid torus with the product of an annulus and an interval, so that the longitude is embedded in the annulus.  
This identification gives an algebra structure to $\Sk(\mathbf{T})$; said algebra is commutative,  as can be seen by `rolling one solid torus around the other'.

The meridian $m$ acts via  $\Sk(\partial \mathbf{T}) \circ \Sk(\mathbf{T}) \to \Sk(\mathbf{T})$.  After  setting $z = q^{1/2} - q^{-1/2}$, the action of $m$ 
becomes diagonalizable with distinct eigenvalues $(\lambda,\mu)$
indexed by pairs of partitions \cite{Hadji-Morton}:
$$
\frac{a-a^{-1}}{q^{1/2}-q^{-1/2}}+
(q^{1/2}-q^{-1/2})\left(a C_\lambda(q)-a^{-1}C_\mu(q^{-1})\right),
$$ 
Here, $C_\lambda$ and $C_\mu$ are the `content polynomials' of the partitions.  
(Some  arguments require inverting these eigenvalues, which we do if necessary without comment.) 
We denote the corresponding eigenvectors by $W_{\lambda, \mu}$; these are normalized by fixing the value of $\langle W_{\lambda, \mu} \rangle_{S^3}$ by an appropriate quantum dimension formula. 
The $W_{\lambda} := W_{\lambda, \emptyset}$ span the submodule $\Sk_+(\mathbf{T})$ generated by links everywhere parallel to the positive longitude; for geometric reasons our skeins will always lie $\Sk_+(\mathbf{T})$.  We will also write $\widehat{\Sk}$ for the completion in which we allow infinite sums, so long as there are only finitely many terms with any given positive winding.

Finally, if $\Lambda$ is the algebra of symmetric functions, with scalars extended to $\mathbb{Z}[a^\pm, z^\pm]$, there is an algebra isomorphism $\Lambda \cong \Sk_+(\mathbf{T})$ carrying the Schur function $s_\lambda$ to $W_\lambda$ \cite{Aiston-Morton}.

The algebra structure of $\Sk(\partial \mathbf{T})$ and its action on $\Sk(\mathbf{T})$ are determined explicitly in \cite{Morton-Samuelson}; where it is shown to be the specialization of the elliptic Hall algebra on its polynomial representation.  Particularly useful operators are given by the unique-up-to-isotopy embedded curve $P_{a,b} \subset \partial \mathbf{T}$ in homology class $am + b \ell$; in particular, $P_{1,0}$ and $P_{0,1}$ are the meridian and longitude.  We use the same notation for the corresponding element of $\Sk(\partial \mathbf{T})$.

For manifolds with boundary, we will also sometimes fix oriented marked points on the boundary, and consider the skeins in which we allow tangles entering/exiting at the marked points. 

We will later want to consider the $a=1$ specialization of $\Sk_+(\mathbf{T})$. 
As the eigenvalues of the meridian operator among the $W_{\lambda}$ remain distinct, these remain linearly independent.

\subsection{Skein dilogarithm}
\label{ssec: skein dilog} 

We recall the skein dilogarithm and related identities,
which were introduced in the present context in \cite{Ekholm-Shende-unknot} as the full multiple cover formula for the disk, and the skein recursion characterizing it.  The $U(1)$ specialization recovers the q-dilogarithm, and some of the q-cluster algebra of \cite{FG2} lifts to the present context, and makes contact with various geometric wall crossing phenomena; see \cite{SS23, SS24, HSZ, Nak24, Ekholm-Longhi-Park-Shende}.

\begin{definition}\label{def : skein dilog}
The \emph{exponentiated skein dilogarithm} $\Psi[\xi] \in \mathbb{Q}[\xi]\otimes \widehat{\Sk}_{+}(\mathbf{T})$ is the unique solution to
\begin{equation}\label{eq:dilog-recurrence}
(\bigcirc - P_{1,0} - a\xi P_{0,1}) \Psi[\xi] = 0 
\end{equation} 
of the form $\Psi[\xi] = 1 + \cdots$.
\end{definition}
Explicitly, 
\begin{equation}\label{eq:explicit-skein-dilog}
\Psi[\xi]:= \sum_{\lambda} \prod_{\square \in \lambda}\frac{-q^{-c(\square)/2}\xi}{q^{h(\square)/2} - q^{-h(\square)/2}} W_{\lambda} =  \mathrm{exp}\left( -\sum_d \frac{1}{d} \frac{\xi^d\, P_d}{q^{d/2}-q^{-d/2}} \right).
\end{equation}
For the existence and uniqueness of the solution, and the first expression for it, see \cite{Ekholm-Shende-unknot}.  
In the second expression, the $P_d$ are the images of the power sum symmetric functions under the identification of symmetric functions and $\Sk_+(\mathbf{T})$. 
The equality of the two formulas can be derived by manipulation of symmetric functions, see e.g. \cite[Section 7]{HSZ} and \cite[Theorem 1.1 (d)]{Nak24}. 
We will simply write $\Psi$ for $\Psi[1]$. 

The skein dilogarithm satisfies a relative version of the 3-term recurrence relation \eqref{eq:dilog-recurrence} in a thickened annulus with 2 marked points on the boundary, derived geometrically in \cite{Ekholm-Shende-unknot}, and by direct algebraic manipulation in \cite[Lemma 5.5]{HSZ} or  \cite[Equation (104)]{Nak24}: 
\begin{equation}\label{eq:dilog-relative-recurrence}
\vcenter{\hbox{
\begin{tikzpicture}
\draw[very thick, blue, <-] (0, 0) [partial ellipse = 0 : 360 : 0.6];
\draw[white, line width=5] (0, 0.3) -- (0, 1);
\draw[very thick, ->] (0, 0.3) -- (0, 1);
\draw[very thick] (0, 0) circle (1);
\draw[very thick] (0, 0) circle (0.3);
\node[blue, right] at (1.0, 0){$\Psi$};
\end{tikzpicture}
}}
\;\;=\;\;
\vcenter{\hbox{
\begin{tikzpicture}
\draw[very thick, ->] (0, 0.3) -- (0, 1);
\draw[white, line width=5] (0, 0) [partial ellipse = 0 : 360 : 0.6];
\draw[very thick, blue, <-] (0, 0) [partial ellipse = 0 : 360 : 0.6];
\draw[very thick] (0, 0) circle (1);
\draw[very thick] (0, 0) circle (0.3);
\node[blue, right] at (1.0, 0){$\Psi$};
\end{tikzpicture}
}}
\;\;+\;\;
\vcenter{\hbox{
\begin{tikzpicture}
\draw[very thick, blue, <-] (0, 0) [partial ellipse = 0 : 360 : 0.5];
\draw[white, line width=5] (0, 0.3) to[out=90, in=150] ({0.7*1/2}, {0.7*sqrt(3)/2});
\draw[very thick] (0, 0.3) to[out=90, in=150] ({0.7*1/2}, {0.7*sqrt(3)/2});
\draw[very thick] (0, 0) [partial ellipse = 60 : -240 : 0.7];
\draw[very thick, ->] ({-0.7*1/2}, {0.7*sqrt(3)/2}) to[out=30, in=-90] (0, 1); 
\draw[very thick] (0, 0) circle (1);
\draw[very thick] (0, 0) circle (0.3);
\node[blue, right] at (1.0, 0){$\Psi$};
\end{tikzpicture}
}}
\;.
\end{equation}

The inverse of the skein dilogarithm is given by:
\begin{equation}\label{eq:skein-dilog-inverse}
\Psi[\xi]^{-1} = \sum_{\lambda} \prod_{\square \in \lambda}\frac{q^{c(\square)/2} \xi}{q^{h(\square)/2} - q^{-h(\square)/2}} W_{\lambda}
\end{equation}
which satisfies an analogous 3-term recurrence relation
\begin{equation}
(\bigcirc - P_{-1,0} - a^{-1} \xi P_{0,1}) \Psi^{-1} = 0 
\end{equation}
where $P_{-1,0}$ is the meridian with opposite orientation, and the corresponding relative version.

Given any framed knot $K$ in any oriented manifold $M$, we write $\Psi[\xi](K)$ for the result of inserting $\Psi[\xi]$ in place of the tubular neighborhood of $K$; similarly $\Psi[\xi]^{-1}(K)$.

\subsection{Mutation and an operator equation}

Consider the
torus illustrated in Figure \ref{fig:skeins on torus}.   

We denote by \(Y_l\) and \(X_k\) the indicated paths, winding $l$ or $k$ times around the vertical direction after composition with the indicated path.  We use the same notation for the corresponding elements of the skein module  \(\Sk(\partial \mathbf{T}), c_\pm)\).

\begin{figure}
    \centering
   \begin{tikzpicture}[every path/.style={line width=1.1pt}, scale = .7]
 \draw[black] (0,0) rectangle (6,6);
  \draw[Red, mid arrow = .7] 
    (3,3) .. controls (3,4) and (2.8,5) .. (2.7,6);
  \draw[Red, mid arrow=0.75] 
    (2.7,0) -- (2.1,6);
\draw[Red, mid arrow=. 75] 
    (2.1,0) -- (1.5,6);
\draw[Red, mid arrow=.75] 
    (.7,0) .. controls (.5,2) and (0, 5) .. (0, 6);
\node at (1,4) {\large \color{Red} \(\cdots\)};
\node[above] at (2,6.1) {\color{Red} \(Y_l\)};

\begin{scope}[shift={(6,0)}, xscale=-1]
     \draw[Green, mid arrow = .7] 
    (3,3) .. controls (3,4) and (2.8,5) .. (2.7,6);
  \draw[Green, mid arrow=0.75] 
    (2.7,0) -- (2.1,6);
  \draw[Green, mid arrow=. 75] 
    (2.1,0) -- (1.5,6);
  \draw[Green, mid arrow=.75] 
    (.7,0) .. controls (.5,2) and (0, 5) .. (0, 6);
  \node at (1,4) {\large \color{Green} \(\cdots\)};
  \node[above] at (2,6.1) {\color{Green} \(X_k\)};
\end{scope}

  \draw[YellowOrange, dashed, line width = 1.5pt, mid arrow=0.6] 
    (0, 0) .. controls (0,2) and (3,1) .. (3,3);

  \node[anchor = north west] at (1, 1.3) {\color{YellowOrange} \large \(p\) };

  \node[draw=blue, circle, fill = blue, inner sep=1.5pt] at (0, 0) {};
  \node[draw=blue,  circle, fill = blue, inner sep=1.5pt] at (3, 3) {};

 \node[left] at (0, 0) {\color{blue} \large \(c_-\)};
  \node[right] at (3, 3) {\color{blue} \large\(c_+\)};
\end{tikzpicture}
    \caption{The skein elements $X_k$  and $Y_l$ in \(\Sk(T^2, c_\pm) \).}
     \label{fig:skeins on torus}
\end{figure}
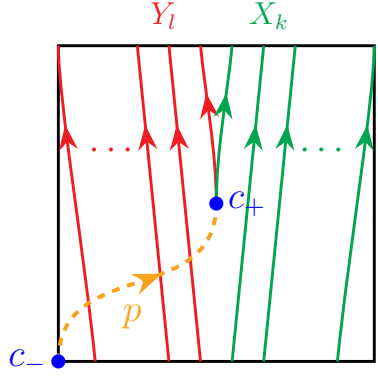

    We also consider the elements 
    \(\overrightarrow{P}_{0, 1}\) and 
    \(\overleftarrow{P}_{0, 1} \in \Sk(T^2, c_\pm)\), defined in Figure \ref{fig:p and p' in relative skein module}.  

    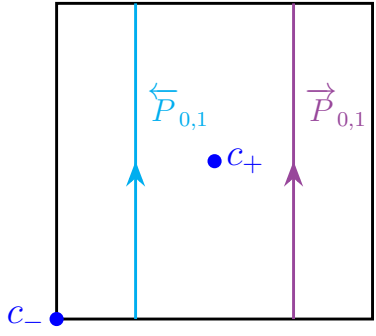
\begin{figure}
        \centering
        \begin{tikzpicture}[every path/.style={line width=1.1pt}, xscale = .7, yscale = .7]
            \draw[black] (0,0) rectangle (6,6);
            
        \node[draw=blue, circle, fill = blue, inner sep=1.5pt] at (3,3) {};
        \node[draw=blue, circle, fill = blue, inner sep=1.5pt] at (0, 0) {};
        \draw[Cyan, mid arrow] (1.5, 0) -- (1.5,6);
        \draw[Purple, mid arrow] (4.5, 0) -- (4.5,6); 
        \node[right] at (4.5, 4) {\textcolor{Purple}{\(\overrightarrow{P}_{0, 1}\)}};
        \node[right] at (1.5, 4) {\textcolor{Cyan}{\(\overleftarrow{P}_{0, 1}\) }}; 
        \node[left] at (0, 0) {\color{blue} \large \(c_-\)};
        \node[right] at (3, 3) {\color{blue} \large\(c_+\)};
        \end{tikzpicture}
        \caption{The skein elements \(\overleftarrow{P}_{0, 1} \) and \(\overrightarrow{P}_{0, 1}\) in \(\Sk(T^2, c_\pm). \) }
        \label{fig:p and p' in relative skein module}
    \end{figure}

\begin{lemma} \label{mutation lemma}
     \(\overrightarrow{P}_{0,1}\) commutes with all \(Y_l\), and \(\overleftarrow{P}_{0, 1}\) commutes with all \(X_k\).  In addition:  
    \begin{align}
        \Ad_{\Psi[t]^{-1}(\overrightarrow{P}_{0,1})} X_k & = X_k - t X_{k+1} \\
        \Ad_{\Psi[t](\overleftarrow{P}_{0, 1})} Y_l &= Y_l - t Y_{l+1} 
    \end{align}     
\end{lemma}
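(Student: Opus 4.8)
The plan is to localize near the relevant $(0,1)$--curve and then invoke the relative three--term recurrence \eqref{eq:dilog-relative-recurrence} (for the $Y_l$) and its $\Psi^{-1}$--analogue, recorded after \eqref{eq:skein-dilog-inverse} (for the $X_k$). Throughout, $\Ad_g x$ denotes $g\cdot x\cdot g^{-1}$, using the left and right actions of $\Sk(T^2)$ on $\Sk(T^2,c_\pm)$ by stacking in the $\mathbb{R}$--factor, and $\Psi[t]^{-1}(\overrightarrow{P}_{0,1})$ is invertible with inverse $\Psi[t](\overrightarrow{P}_{0,1})$.

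For the commutation statements: composing $Y_l$ with the reference path $p$ yields a loop whose class is, by inspection of Figure~\ref{fig:skeins on torus}, a multiple of $\ell$, hence has vanishing algebraic intersection with $[\overrightarrow{P}_{0,1}]=\ell$; moreover the figures realize $Y_l$ and $\overrightarrow{P}_{0,1}$ as disjoint curves in $T^2$ (they occupy the two components of $T^2$ cut along the vertical circles through $c_\pm$). So the two insertions of $\overrightarrow{P}_{0,1}$ in $\overrightarrow{P}_{0,1}\cdot Y_l$ and in $Y_l\cdot\overrightarrow{P}_{0,1}$---placed above, resp.\ below, $Y_l$ in the $\mathbb{R}$--direction---can be isotoped past the height of $Y_l$ without meeting it, so they coincide. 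The same argument with $\overleftarrow{P}_{0,1}$ and $X_k$ gives the other commutation. (Being supported in a neighborhood of $\overrightarrow{P}_{0,1}$, the element $\Psi[t]^{-1}(\overrightarrow{P}_{0,1})$ then also commutes with every $Y_l$.)

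For the mutation formulas I would argue the $X_k$ case and note that the $Y_l$ case is the same with $\overleftarrow{P}_{0,1},\,\Psi$ in place of $\overrightarrow{P}_{0,1},\,\Psi^{-1}$---the crossing of $Y_l$ with $\overleftarrow{P}_{0,1}$ has the opposite sign to that of $X_k$ with $\overrightarrow{P}_{0,1}$, which is exactly why $\Psi$ rather than $\Psi^{-1}$ enters. The loop $p\cdot X_k$ represents $m+(k+c)\ell$ for a fixed constant $c$ (and, by construction of the $X_k$, the curve $X_{k+1}$ is $X_k$ with one extra $+\ell$--winding), so $X_k$ meets $\overrightarrow{P}_{0,1}$ algebraically once; after isotoping $\overrightarrow{P}_{0,1}$ to a generic position, $X_k$ meets a thin annular neighborhood $A$ of it in a single arc crossing the core circle once. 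Inside $A\times\mathbb{R}$, the element $\Ad_{\Psi[t]^{-1}(\overrightarrow{P}_{0,1})}X_k$ is exactly the configuration of the relative recurrence: a radial strand running from one boundary circle of $A$ to the other, with $\Psi[t]^{-1}$ inserted on the core. The recurrence rewrites it as the radial strand plus $-t$ times the radial strand carrying one extra positive longitudinal loop around the core. Regluing the untouched complement $(T^2\setminus A)\times\mathbb{R}$ then sends the first term back to $X_k$ and the second to $X_{k+1}$, giving $\Ad_{\Psi[t]^{-1}(\overrightarrow{P}_{0,1})}X_k = X_k - tX_{k+1}$.

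The conceptual step is the localization---verifying that $X_k$ (resp.\ $Y_l$) really meets a neighborhood of the chosen $(0,1)$--curve in a single radial arc, and that cutting along $A$ and regluing identifies the two terms of the recurrence with $X_k$ and $X_{k+1}$ on the nose. I expect the main obstacle to be convention bookkeeping rather than anything deep: which of $\Psi,\Psi^{-1}$ pairs with which curve (dictated by the sign of the crossing), the over/under ordering that realizes $\Ad$ as left-times-right multiplication, the direction of the extra winding, and the choice of longitude lift (the $\mathbb{Z}$--torsor of framings) together with the coefficient normalization in \eqref{eq:dilog-recurrence}---all of these must be pinned down consistently so that the right-hand sides come out as precisely $X_k-tX_{k+1}$ and $Y_l-tY_{l+1}$, and not a sign- or index-shifted variant.
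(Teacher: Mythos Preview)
Your proposal is correct and follows essentially the same approach as the paper: the paper's proof reads, in full, ``The first assertion is obvious.  The second is an application of Equation~\eqref{eq:dilog-relative-recurrence}.'' Your argument is a careful unpacking of exactly this---disjointness for the commutation, and localizing to an annular neighborhood of the $(0,1)$--curve so that the relative three-term recurrence applies---together with an honest acknowledgment that the remaining work is sign/framing/over--under bookkeeping rather than a new idea.
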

\begin{proof}
    The first assertion is obvious.  The second is an application of Equation \ref{eq:dilog-relative-recurrence}. 
\end{proof}

\begin{theorem} \label{solving the recursion}
Let $\alpha_i, \beta_j$ be any scalars.  Define $A_i, B_j$ by the formulas: 
\begin{align*}
    \prod_i (1 - \alpha_i x)  &= \sum_i A_i x^i\\
    \prod_j (1 - \beta_j x)  &= \sum_j B_j x^j.
\end{align*}


    Then the equation
\begin{equation}\label{eq: recursion relation}
      \left(  \sum_i A_i X_i - \sum_j B_j Y_j \right) \cdot Z = 0 
    \end{equation}
    has a unique solution in $\widehat{\Sk}(\mathbf{T})$ up to scalar multiple, and it is:
    \begin{equation}
        Z = \frac{\prod \Psi[\alpha_i]}{\prod \Psi[\beta_j]}.
    \end{equation}
    Moreover, the equation also has a unique-up-to-scalar solution in $\widehat{\Sk}_+(\mathbf{T})|_{a=1}$, given by $Z|_{a=1}$.
\end{theorem}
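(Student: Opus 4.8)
The plan is to prove all of Theorem~\ref{solving the recursion} by conjugating \eqref{eq: recursion relation} into the $\xi=0$ case of the recursion of Definition~\ref{def : skein dilog} by means of the mutation Lemma~\ref{mutation lemma}, and then to note that every ingredient of this reduction is already defined over $\mathbb{Z}[a^{\pm},z^{\pm}]$ — including the localizations at the meridian eigenvalues one passes to — so that the $a=1$ statement follows by specialization together with one non‑vanishing check.

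First I would set $\mathcal{M}:=\prod_i \Psi[\alpha_i]^{\mp1}(\overrightarrow{P}_{0,1})$ and $\mathcal{N}:=\prod_j \Psi[\beta_j]^{\pm1}(\overleftarrow{P}_{0,1})$ (each a single cabling of the respective curve; the precise inverses, orientations and framings are chosen to match \eqref{eq:dilog-recurrence}, \eqref{eq:explicit-skein-dilog} and \eqref{eq:skein-dilog-inverse}). Iterating Lemma~\ref{mutation lemma} gives $\mathcal{M}\,X_0\,\mathcal{M}^{-1}=\sum_i A_i X_i$ and $\mathcal{N}\,Y_0\,\mathcal{N}^{-1}=\sum_j B_j Y_j$, and since $\overrightarrow{P}_{0,1}$ commutes with the $Y_l$ and $\overleftarrow{P}_{0,1}$ with the $X_k$ one obtains the operator identity
\[
  \sum_i A_i X_i-\sum_j B_j Y_j=\mathcal{N}\mathcal{M}\,(X_0-Y_0)\,(\mathcal{N}\mathcal{M})^{-1}.
\]
Hence the solution space of \eqref{eq: recursion relation} is the image of $\ker(X_0-Y_0)$ under the (appropriately acting) operator $\mathcal{N}\mathcal{M}$. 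Now $X_0-Y_0$ is the no‑parameter case of \eqref{eq: recursion relation} (with solution $Z=1$), which by construction is the $\xi=0$ specialization $\bigcirc-P_{1,0}$ of \eqref{eq:dilog-recurrence}: since $\bigcirc$ acts by the scalar $\tfrac{a-a^{-1}}{q^{1/2}-q^{-1/2}}$ and $P_{1,0}$ acts diagonally in the basis $\{W_\lambda\}$ with the pairwise distinct Hadji--Morton eigenvalues \cite{Hadji-Morton}, the operator $X_0-Y_0$ is diagonalized by $\{W_\lambda\}$ with eigenvalue $-a\,(q^{1/2}-q^{-1/2})\,C_\lambda(q)$, which vanishes only at $\lambda=\emptyset$; thus $\ker(X_0-Y_0)=\langle W_\emptyset\rangle$. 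A short computation in the commutative algebra $\widehat{\Sk}_+(\mathbf{T})$ — using that cabling a longitude by a skein dilogarithm and applying it to the empty skein reproduces that dilogarithm, and that it then acts on $\widehat{\Sk}_+(\mathbf{T})$ by multiplication — identifies the generator of the solution space with $Z=\prod_i\Psi[\alpha_i]/\prod_j\Psi[\beta_j]$. This proves existence and uniqueness in $\widehat{\Sk}(\mathbf{T})$, and $Z\in\widehat{\Sk}_+(\mathbf{T})$.

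For the final statement: the coefficients of $\Psi[\xi]^{\pm1}$ in the basis $\{W_\lambda\}$ displayed in \eqref{eq:explicit-skein-dilog} and \eqref{eq:skein-dilog-inverse} do not involve $a$, and Schur‑function (Littlewood--Richardson) multiplication does not either, so $Z$ has $a$‑free coefficients and $Z|_{a=1}$ is a well‑defined element of $\widehat{\Sk}_+(\mathbf{T})|_{a=1}$ (the $W_\lambda|_{a=1}$ being linearly independent, as recalled in the text). Equation \eqref{eq: recursion relation}, the operators $X_k,Y_l$ (which preserve $\widehat{\Sk}_+(\mathbf{T})$), Lemma~\ref{mutation lemma}, and the conjugators $\mathcal{M},\mathcal{N}$ together with their inverses are all defined over $\mathbb{Z}[a^{\pm},z^{\pm}]$, so applying the ring homomorphism $a\mapsto 1$ shows that $Z|_{a=1}$ solves the specialized equation and that, inside $\widehat{\Sk}_+(\mathbf{T})|_{a=1}$, the solution space is the image of $\ker\!\left((X_0-Y_0)|_{a=1}\right)$ under $\mathcal{N}|_{a=1}\mathcal{M}|_{a=1}$. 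So the whole argument descends verbatim once one knows $(X_0-Y_0)|_{a=1}$ still has one‑dimensional kernel; but its eigenvalue on $W_\lambda|_{a=1}$ is $-a\,(q^{1/2}-q^{-1/2})\,C_\lambda(q)$ specialized to $-(q^{1/2}-q^{-1/2})\,C_\lambda(q)$, which is nonzero for $\lambda\neq\emptyset$ because $C_\lambda(1)=|\lambda|\neq 0$ — this is exactly the stated fact that the meridian eigenvalues on $\Sk_+(\mathbf{T})$ remain distinct at $a=1$. This yields uniqueness up to scalar at $a=1$, completing the proof.

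The main obstacle is not the $a=1$ bookkeeping but the careful setup of the reduction for generic $a$: one must make precise the module in which \eqref{eq: recursion relation} is posed, verify that $\mathcal{M},\mathcal{N}$ are invertible on the completion $\widehat{\Sk}_+(\mathbf{T})$ and that the conjugation identity (and its consequence for solution spaces) holds there, and fix the orientations and framings so that $X_0-Y_0$ is precisely the $\xi=0$ specialization of \eqref{eq:dilog-recurrence} and $\mathcal{N}\mathcal{M}$ sends $W_\emptyset$ to $Z$ rather than to $Z^{-1}$. For the $a=1$ addendum specifically, the only genuinely new input is the non‑vanishing of $C_\lambda(q)$ for $\lambda\neq\emptyset$; everything else is inherited by specialization because all the structures in play are integral in $a^{\pm1}$.
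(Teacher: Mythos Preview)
Your proposal is correct and follows essentially the same route as the paper: define a conjugator out of skein dilogarithms cabled on $\overrightarrow{P}_{0,1}$ and $\overleftarrow{P}_{0,1}$, use Lemma~\ref{mutation lemma} to show that conjugation carries $X_0-Y_0$ to $\sum_i A_iX_i-\sum_j B_jY_j$, solve the base case $(X_0-Y_0)\cdot Z_0=0$ via the meridian eigenvalue argument, and transport the solution. Your treatment of the $a=1$ addendum (checking that the relevant eigenvalues $-z\,C_\lambda(q)$ stay nonzero for $\lambda\ne\emptyset$) is more explicit than the paper's, which leaves that specialization to the reader; one small caution is that your identification of $X_0-Y_0$ with the $\xi=0$ case $\bigcirc-P_{1,0}$ of \eqref{eq:dilog-recurrence} is only correct after capping with $p$ and up to an overall sign (the Remark after the theorem records $X_0\mapsto P_{1,0}$, not $\bigcirc$), but this does not affect the kernel computation.
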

\begin{proof}
    The equation
    \begin{equation}
        (X_0 -  Y_0) \cdot Z_0 = 0 \label{eq:x-y=0}.
    \end{equation} 
    can readily be seen to have unique (up to scalar) solution $Z_0 = 1$.   (E.g. close up with the path $p$, then this follows from the fact that the action of the meridian diagonalizes, with only scalars having eigenvalue $1$.) 

    We set 
    $$\widetilde{Z} := \prod \Psi[\alpha_i](\overrightarrow{P}_{0,1}) \prod \Psi[\beta_i]^{-1}(\overleftarrow{P}_{0,1}) \in \widehat{\Sk}(\partial \mathbf{T}).$$ 
    The image of this element in $\widehat{\Sk}(\mathbf{T})$ is $Z$. 
    
    We calculate using Lemma \ref{mutation lemma}
    $$\Ad_{\widetilde{Z}}(X_0 - Y_0) = \sum_i A_i X_i - \sum_j B_j Y_j$$

    The theorem now follows by conjugating Equation \ref{eq:x-y=0} by $\widetilde{Z}$. 
    %
    %
%
%
\end{proof}

\begin{remark}
    The previous occurences of skein-valued cluster transformations have had geometric explanations in terms of disk surgery \cite{SS24} or wall crossing \cite{Ekholm-Longhi-Park-Shende}.  We do not presently have a similar geometric interpretation of the proof of Theorem \ref{solving the recursion}.
\end{remark}

\begin{remark}
Capping with the path \(p\) sends: 
\begin{align*}
    \cap \ p: \Sk(T^2, c_\pm) &\longrightarrow \Sk(T^2)\\
    X_k &\longmapsto P_{1, k}.
\end{align*}
but introduces an asymmetry: $Y_l \not \mapsto  P_{-1, l}$. 
\end{remark}

\begin{remark}
    The HOMFLYPT skein has a specialization to the $U(1)$ skein, given by setting $a=q^{1/2}$, and imposing the additional  relation: 
        $$
    \begin{tikzpicture}[scale = .5]
        \draw[thick, -Stealth] (-1, -1) -- (1, 1);
        \draw[thick, -Stealth] (-.2, .2) -- (-1, 1);
        \draw[thick] (1, -1) -- (.2, -.2);
        \node at (3, 0) {\(= \ q^{1/2}\)} ;

        \begin{scope}[shift = {(5.5, 0)}]
        \draw[thick, -Stealth] (-1, -1) .. controls (-.1, 0) .. (-1, 1);
        \draw[thick, -Stealth] (1, -1) .. controls (.1, 0) .. (1, 1);
        \end{scope}
    \end{tikzpicture}
    $$
    By \cite{przytycki1998q}, the $U(1)$ skein of a surface \(S\) is isomorphic to the quantum torus modeled on the lattice \(H_1(S , \bZ)\). 
   For the surface $\partial \mathbf{T}$, 
   the quantum torus is: 
   \begin{equation}
        \bC[q^{\pm 1/2}] \langle \hat{x}^\pm, \hat{y}^\pm \rangle / {q\hat{x} \hat{y} -  \hat{y} \hat{x}  }
   \end{equation}
    and the specialization is given by  
    \begin{align*}
        P_{a, b} & \longmapsto q^{ab/2} \; \hat{y}^a \, \hat{x}^b . 
    \end{align*}
    Under this map, the operator \( \left(  \sum_i A_i X_i - \sum_j B_j Y_j \right) \) (after capping with the path $p$) goes to
    \begin{equation} \label{quantum mirror curve to strip}
        \hat{y}\prod_i (1- \alpha_i q^{1/2} \hat{x}) + \prod_j (1 - \beta_j q^{1/2}\hat{x})
    \end{equation}
    At \(q = 1\) we obtain a polynomial in two variables:
\begin{equation}\label{eq: mirror curve}
 y\prod_i (1- \alpha_i x) + \prod_j (1 - \beta_j x)
\end{equation}
\end{remark}

%



\section{Recollections on toric geometry}\label{subsec:Toric CY and Strips}

\subsection{Semi-projective toric varieties.} 
We follow the conventions of \cite{Fang-Liu-open}:
\begin{itemize}
    \item Let \(\triangle\) be a fan in a lattice \(N \simeq \bZ^n\), and let \(\triangle(d)\) denote the set of all \(d\)-dimensional cones. 
    \item Denote by \(|\triangle|\) the \emph{support} of \(\triangle\), which is the union of all cones, living in \(N_{\bR} = N \otimes_\bZ \bR \).
    \item Write \(\triangle(1) = \{\rho_1, \rho_2, \dots, \rho_m\}\), where each ray satisfies \(\rho_i \cap N = \bZ_{\ge 0} b_i\). 
    \item Let \(M = \Hom(N, \bZ)\) be the dual lattice. 
\end{itemize}

 We assume that \(|\triangle|\) is convex. By \cite[Proposition 7.2.9]{CLS}, this is equivalent to the corresponding toric variety being semi-projective.

Let \(\widetilde{N} = \bigoplus_{i=1}^m \bZ \widetilde{b}_i\) be a lattice of rank \(r\), and consider the short exact sequence
\[
0 \longrightarrow \bL \longrightarrow \widetilde{N} \xlongrightarrow{\phi} N \longrightarrow 0,
\]
where \(\phi\) sends \(\widetilde{b}_i\) to \(b_i\), and \(\bL = \ker(\phi)\) is a lattice of rank \(k = m - n\).  
Tensoring with \(\bC^*\) yields an exact sequence of algebraic tori
\[
1 \longrightarrow G \longrightarrow \widetilde{T} \longrightarrow T \longrightarrow 1.
\]
Let \(G_{\bR} \simeq (S^1)^{k}\) be the maximal compact torus in \(G\), which acts on \(\bC^m\).  
Denote by
\[
\widetilde{\mu}: \bC^m \longrightarrow \fg_\bR \simeq \bR^{k}
\]
the corresponding moment map, where \(\fg_\bR\) is the Lie algebra of \(G_\bR\).  
Then the toric variety \(X\) can be realized as the symplectic quotient
\begin{equation} \label{eq:symplectic_reduction}
X = \widetilde{\mu}^{-1}(r)\,/\,G_\bR,
\end{equation}
for some point \(r = (r_1, r_2, \dots, r_k)\) in \(\bR^k\).  
Note that \(\fg_\bR\) can be identified with \(A_{n-1}(X)\otimes \bR \simeq H^{1,1}(X, \bR)\).  
The choice of \(r\) thus determines a K\"ahler class of \(X\).

Let \( \{l^{(a)} \}_{a = 1}^{k} \) be a basis for \(\bL  \subset \widetilde{N}\). The action of \(G_\bR\) can be written as
\begin{equation*}
    (t_1, \dots, t_k) \cdot ( x_1, \dots, x_m) = \left( \prod_{a =1 }^k t_a^{l^{(a)}_1} x_1, \dots,  \prod_{a=1}^k t_a^{l_k^{(a)}} x_k \right)
\end{equation*}
and we have:
\begin{equation*}
    \widetilde{\mu} (x_1, x_2, \dots, x_{m}) = \left(\sum_{i  =1}^{m} l_i^{(1)}|x_i|^2, \dots, \sum_{i = 1}^{m} l_i^{(k)} |x_i|^2 \right).
\end{equation*}

\subsection{Toric CY3}

Now assume that \(X\) is a toric Calabi--Yau threefold. 
Then \(m = k + 3\), and the Calabi--Yau condition is equivalent to
\[
\sum_i l^{(a)}_i = 0,
\]
for all \(a = 1, \dots, k\). 
In other words, all the vectors \(b_i\) lie in the plane \(\{z = 1\}\).

The Calabi--Yau condition gives rise to a two-dimensional subtorus 
\(T' \subset T\), acting trivially on the canonical bundle \(K_X\). 
Consider the moment map of the real torus \(T'_\bR \subset T'\):
\begin{equation}\label{eq:moment map to R2}
\mu': X \longrightarrow \bR^2.
\end{equation}
The set of critical values of \(\mu'\) forms a (balanced) trivalent graph, 
called the \emph{formal toric Calabi--Yau (FTCY)}. 
Assuming all victors \(b_i\) live on the \( \{z = 1\} \) plane, the intersections of the cones in \(\triangle\) with this plane give rise to a \(2\)-dimensional polygon with a triangulation (see Figure~\ref{subfig:toric fan} for example).  The dual graph of the triangulation is the FTCY graph.

The FTCY graph can also be identified with the tropicalization of the mirror curve.

\subsection{Aganagic--Vafa branes}
In \cite{Aganagic-Vafa}, Aganagic and Vafa introduced a class of Lagrangian submanifolds of \(X\), given by the equations 
\begin{equation} \label{general AV brane}
    \sum_{i=1}^{k+3} \hat{l}_i^1 |x_i|^2 = c_1, \quad \sum_{i=1}^{k+3} \hat{l}_i^2 |x_2|^2 =c_2, \quad \sum_{i=1}^{k+3} \phi_i = \text{const},
\end{equation}
where \(\phi_i = \arg(x_i) \), \(\hat{l}_i \in \bZ\), and 
\[\sum_{i=1}^{k+3} \hat{l}^\alpha_i = 0, \quad \alpha = 1, 2.\] 
The real numbers \(c_i\) are chosen so that the image of this Lagrangian under the map \eqref{eq:moment map to R2} is a point lying on the FTCY graph of \(X\). 
Topologically, such a Lagrangian is homeomorphic to \(S^1 \times \bR^2\).

We will take the Aganagic--Vafa brane sitting on an external edge, as illustrated in \textcolor{Blue}{blue} in Figure~\ref{fig:toric fan of a strip}. 
It is given in coordinates by the following specialization of Equation \ref{general AV brane}: 
\begin{equation} \label{which AV brane}
    |x_3|^2 - |x_1|^2 = c, \quad |x_2|^2 - |x_1|^2 = 0.
\end{equation}
where \(c\) is a positive real number. 
Denote this Lagrangian by \(L_{AV}\).

Here the direction of the blue ray indicates the ``framing'', which 
will be discussed in Section~\ref{subsec:top_vertex}. 

\begin{example}The resolved conifold is given by the FTCY:

\begin{center}
\begin{tikzpicture}[line width = 1pt]
    \draw (-1, 0) -- (0, 0) -- (0, 1);
    \draw (0, 0) -- (1, -1) -- (2, -1); 
    \draw (1, -1) -- (1, -2); 
\end{tikzpicture}
\end{center}
It can be written as: 
    \begin{equation*}
        X = \{ |x_1|^2 - |x_2|^2 - |x_3|^2 + |x_4|^2 = r\} / U(1)
    \end{equation*}
    where \(U(1)\) acts by
    \[t: (x_1, x_2, x_3, x_4) \longrightarrow (t x_2, t^{-1} x_2, t^{-1} x_3, t x_4). \]
Under the ``conifold transition'', the Aganagic--Vafa brane \(L_{AV}\) becomes the conormal to the unknot in the three sphere.
\end{example}

\subsection{Strips}

We will be interested in semi-projective  toric Calabi--Yau threefolds which contain no compact surfaces, or equivalently, for which the FTCY is a tree. 
Following \cite{IKP,PS19}, we refer to these as \emph{strips}.

One observes that the \(X\) is a strip if and only if the dual triangulation of the FTCY contains no interior integral points. Thus, by the convexity condition, we can always assume that a strip has the form in Figure~\ref{fig:strip} (up to an \(GL(2, \bZ)\) transformation), i.e. ,
all vectors \(b_i\) lie in a ``strip'' \( \{0, 1\} \times \bZ \times \{1\}\). The corresponding FTCY is drawn in \textcolor{Purple}{purple}.

\begin{figure}
    \centering
    \begin{tikzpicture}[ scale = 2]
    \begin{scope}[line width = 1pt]
        \draw (0, 0) -- (0, -1) -- (4, -1)-- (5, 0)  -- (0, 0);
        \draw (1, 0) -- (0, -1);
        \draw (1, 0) -- (1, -1);
        \draw (1, 0) -- (2, -1);
        \draw (1, 0) -- (3, -1);
        \draw (2, 0) -- (3, -1);
    
        \node at (3.5, -.5) {\large \(\cdots\)};

    \end{scope}
    
         \draw[->] (-0.5,0) -- (6,0) node[right] {$y$};
  
        \draw[->] (0,0.5) -- (0,-2) node[below] {$x$};
        
        \node[anchor = south] at (5, 0) {\((0, l)\)};
\fill (0,0) circle (1pt) node[above left] {$b_1$};
\fill (1,0) circle (1pt) node[above] {$b_3$};
\fill (2,0) circle (1pt) node[below] {$b_7$};
\fill (5,0) circle (1pt) node[below right] {$b_{k+3}$};

\fill (0,-1) circle (1pt) node[below left] {$b_{2}$};
\fill (1,-1) circle (1pt) node[below] {$b_{4}$};
\fill (2,-1) circle (1pt) node[below] {$b_{5}$};
\fill (3,-1) circle (1pt) node[below] {$b_{6}$};

\begin{scope}[Purple, line width=1pt]
    \draw (-.8, -.5) -- (.3, -.5) -- (.5, -.7) -- (1.3, -.7) -- (1.6, -.4) -- (1.7, -.2) -- (2.2, .3);
    \draw (.3, -.5) -- (.3, .5);
    \draw (.5, -.7) -- (.5, -1.5); 
    \draw  (1.3, -.7) -- (1.3, -1.5); 
    \draw (1.6, -.4) -- (1.6, -1.5); 
    \draw (1.7, -.2) -- (1.7, .5);
\end{scope}

\draw[line width = 1.5 pt, Blue] (-.6, -.5) -- +(0, .8);

    \end{tikzpicture}

    \caption{A example of a strip. The \textcolor{Purple}{purple} graph is the corresponding FTCY, and the Aganagic--Vafa brane \(L_{AV}\) is illustrated in \textcolor{Blue}{blue}.}
    \label{fig:toric fan of a strip}
\end{figure}
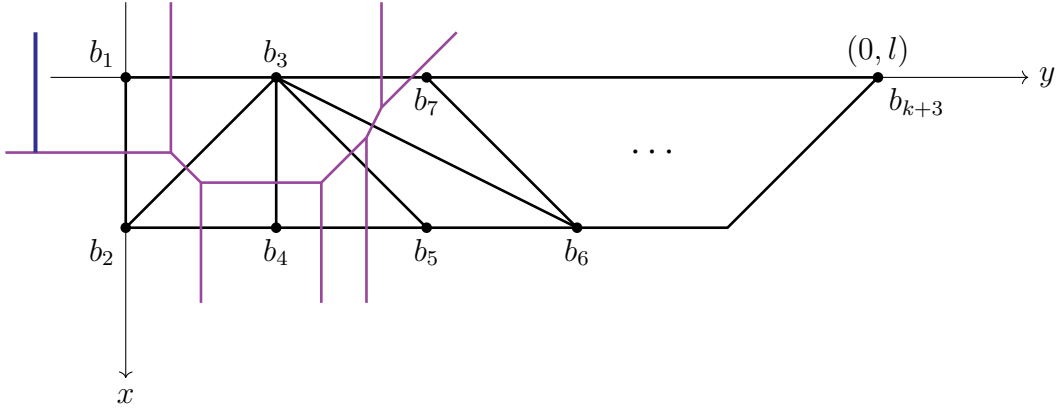

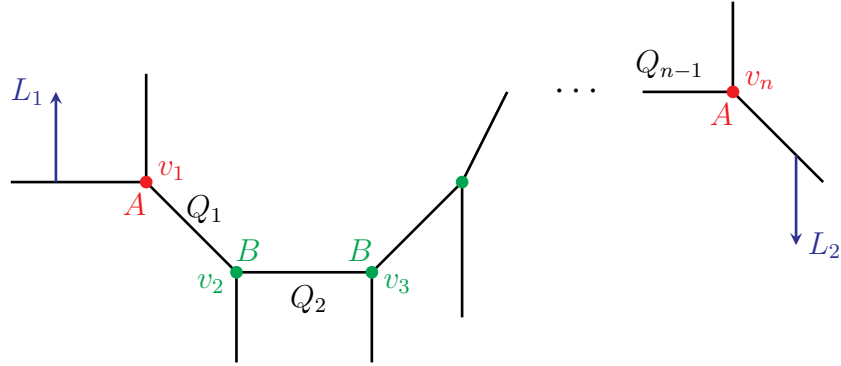
\begin{figure}
    \centering
 
\begin{tikzpicture}[every path/.style={line width=1pt}, xscale = 1.2, yscale = 1.2]

    \begin{scope}[shift = {(-1, -1)}]
    \draw (-1.5, 0) -- (0, 0) -- (0, 1.2);
    \draw (0,0) -- (1, -1); 
    \draw (1, -1) -- (1, -1); 
    \draw (1, -1) -- (1, -2);
    \draw (1, -1) -- (2.5, -1);
    \draw (2.5, -1) -- (2.5, -2);
    \draw (2.5, -1) -- (3.5, 0);

    \draw (3.5, 0) -- (3.5, -1.5);
    \draw (3.5, 0) -- (4, 1); 
    \fill[Green] (3.5, 0) circle (2pt);

    \node (A)  at (0, 0) {};
    \fill[Red] (A) circle (2pt);
    \node [anchor = north] at (A.west) {\color{Red} \(A\) };
    \node [anchor = west] at (A.north) {\color{Red} \(v_1\)};
    \node (B) at (1, -1) {};
    \fill[Green] (B) circle (2pt);
    \node [anchor = south] at (B.east) { \color{Green} \(B\)}; 
    \node [anchor = east] at (B.south) {\color{Green} \(v_2\)};
    \node (B1) at (2.5, -1) {};
    \fill[Green] (B1) circle (2pt);
    \node [anchor = south] at (B1.west) {\color{Green} \(B\)}; 
    \node [anchor = west] at (B1.south) {\color{Green} \(v_3\)};

    \draw[Blue, -stealth] (-1, 0) -- (-1, 1);
    
    \node[left] at (-1, 1) {\textcolor{Blue}{\(L_1\)}};

    \node at (.65, -.3) {$Q_1$};
    \node at (1.8, -1.3) {\(Q_2\)};
    
    \end{scope}
    
    \node[right] at (6.2, -1.7) {\textcolor{Blue}{\(L_2\)}};
  
    \node at (3.8, 0) {\large $\cdots$} ;

    \draw (4.5, 0) -- (5.5, 0) -- (6.5, -1);
    \draw (5.5, 0) -- (5.5, 1);
    \node (An) at (5.5, 0) {};
    \fill[Red] (An) circle (2pt);
    \node [anchor = north] at (An.west) {\color{Red} \(A\)}; 
    \node [anchor = west] at (An.north) {\color{Red} \(v_n\)};

    \draw[Blue, -stealth] (6.2, -.7) -- (6.2, -1.7);

    \node at (4.8, .3) {\(Q_{n-1}\)}; 
    \end{tikzpicture}

    \caption{A strip. The blue edges are the Aganagic--Vafa branes.}
    \label{fig:strip}
\end{figure}

Now consider the FTCY of a strip, as illustrated in Figure \ref{fig:strip}. Each vertex is adjacent to an edge pointing upward (type \(A\), drawn in red) or an edge pointing downward (type \(B\), drawn in green). 
We order the edges and vertices from left to right. Denote the vertices by \(v_1, v_2,  \cdots, v_n\), and the K\"ahler parameters corresponding to the edges by \(Q_1, Q_2, \dots, Q_{n-1}\). Without loss of generality, we assume \(v_1\) is of type \(A\).  Following \cite{PS19}, we denote
\begin{equation*}
   Q_{i,j} = \begin{dcases}
        \prod_{l=i}^{j-1} Q_l & \text{ \(1\) if \(i \ge j-1\).}\\
        1 &       \text{ \(1\) if \(i \ge j\).}
   \end{dcases}
\end{equation*}
If the \(i\)-th vertex is of type \(A\),  we will write:
\begin{equation}
\alpha_i :=  Q_{1, i} . 
\label{eq:def alpha}
\end{equation}
Similarly, if the \(j\)-th vertex is of type \(B\), take
\begin{equation}\beta_j :=  Q_{1, j} .
\label{eq:def beta}
\end{equation}
(Note that for each $i$, either $\alpha_i$ or $\beta_i$ is not defined.) 

In these variables, the mirror curve for the strip was identified in \cite{PS19} to be 
Equation \eqref{eq: mirror curve}, and the `quantum mirror' was identified in \cite{Banerjee-Hock} to be given by \eqref{quantum mirror curve to strip}.\footnote{More precisely, in those references $x$ and $y$ are switched, because they are working with respect to a different Aganagic-Vafa brane.}

\section{Skein valued mirror curves from geometry}

Fix a CY3 strip $X$, and the Aganagic-Vafa brane $L$ of \eqref{which AV brane}.  
In this section, we determine the skein-valued count of curves at infinity asymptotic to a single index zero Reeb chord of $L$.  The result is an instance of the operator \eqref{eq: recursion relation}.  We show also that $X$ has only positive index Reeb orbits and $L$ has only positive index Reeb chords.  It then follows from stretching considerations 
that said operator annihilates the skein-valued count of compact curves in $X$ ending on $L$, and hence that said count is determined by Theorem \ref{solving the recursion}.

\subsection{Recollections on skein-valued curve counting}

  We review the setup of the skein-valued curve counting of \cite{Ekholm-Shende-skeins}, and also the conditions under which counting curves at infinity determines a skein-valued mirror \cite{Ekholm-Shende-unknot, SS23}.

  Let $X$ be a symplectic 6-manifold with trivialized first Chern class.  Let  $L \subset X$ be a smooth Lagrangian submanifold with trivialized Maslov class.  Fix an almost complex structure $J$, standard in a neighborhood of $L$.  
  We require that $(X, L, J)$  `bounded geometry' for Gromov compactness arguments; typically (and here) this is ensured by demanding $(X, L, J)$ asymptotically convex conical at infinity; in this case we have also the `symplectic field theory' compactness results \cite{BEHWZ}. 

  We fix a spin structure on $L$.  In the general setup of \cite{Ekholm-Shende-skeins}, one fixes some additional data -- a vector field on $L$ and compatible $4$-chain bounding $2L$ -- however, here we will specialize the skein at $a=1$, in which case this data is not necessary (see \cite[Sec. 4.2]{Ekholm-Longhi-Park-Shende}).\footnote{As with \cite{Ekholm-Shende-unknot, Ekholm-Shende-colored, Ekholm-Longhi-Nakamura, ELS}, even if we retained the $a$ variable, we would find nevertheless that, for the right choice of 4-chain, no $a$'s actually appear in the curve counts.  Thus we just omit it from the beginning for simplicity.} 

Fix a class $d \in H_2(X, L)$.  A map $u\colon (C, \partial C) \to (X, L)$ is said to be {\em bare} if $u^{\ast}(\omega)$ gives positive symplectic area to every irreducible component of $C$.  We write $\chi(u)$ for Euler characteristic of a smoothing of the domain of $u$.  Given some system of (weighted, multivalued) perturbations for the holomorphic curve equation, if $u$ is a transverse solution to the perturbed equation, we write $\mathrm{wt}(u)$ for the local contribution of this solution.  
After \cite{Ekholm-Shende-skeins, Ekholm-Shende-ghost, Ekholm-Shende-bare}, that there is a system of perturbations to the holomorphic curve equation so that the following sum over bare solutions is well defined and deformation invariant: 
\begin{equation} \label{skein count definition}
    Z_{X, L, d} = \sum_{[u] = d} \mathrm{wt}(u) \cdot z^{-\chi(u)} \cdot [u(\partial \Sigma)] \in \Sk_{a=1}(L) \otimes \mathbb{Q}[[z]]
\end{equation}

To sum over $d$, one generally must extend scalars by the Novikov ring $\mathbb{Q}[ H_2(X, L)]$.
We may omit this completion from the notation, and write $Z_{X, L} = \sum_d Z_{X, L, d} \in \Sk(L)$.

Finally, we recall from \cite{Ekholm-Shende-unknot} how counting at infinity gives a skein-valued operator equation.  We now ask that $X, L$ are asymptotically conical at infinity (hence has ideal boundary modeled by a stable Hamiltonian structure and Lagrangian therein), 
and that all Reeb orbits of $\partial X$ and Reeb chords of $\partial L$ have strictly positive index.  Consider a chord $c$ of $L$ of index one.  The moduli $M(c)$ of holomorphic curves with boundary on $L$ with one positive puncture at $c$ is (after perturbation) a dimension one manifold, which may be compactified using the usual Gromov compactification (giving boundary breakings in the interior of $X$) together with the SFT compactness results (when energy or genus escapes to $\partial X$).  The sum of all contributions will be zero, and the boundary breakings are already cancelled by the skein-valued counting.  On the other hand, since there are only positive index Reeb chords and orbits, the only SFT breakings are when some components of the curve escapes entirely.   In sum:

\begin{lemma} \cite{Ekholm-Shende-unknot} \label{why recursion}
Suppose $(X, L)$ is asymptotically conic, and all Reeb chords or orbits of $(\partial X, \partial L)$ 
are of positive indices. 
Then 
let $A_{\partial X, \partial L}(c) \in \Sk(\partial L; c_{\pm})$ be the skein-valued count of $\R$-families of curves in $(\partial X \times \R, \partial L \times \R)$ with one positive puncture at $c$, and let $Z_(X, L)$ be the count of compact holomorphic curves in $X$ with boundary on $L$.  Then
\begin{equation}  \label{boundary equation}
A_{\partial X, \partial L}(c) \cdot Z_{X, L} = 0.
\end{equation}
\end{lemma}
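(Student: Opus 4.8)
The plan is to run the standard ``boundary of a one-dimensional moduli space'' argument of \cite{Ekholm-Shende-unknot}, keeping track of everything in the skein module. Fix the index-one Reeb chord $c$ of $\partial L$ and a class $d \in H_2(X,L)$, and let $\mathcal{M}_d(c)$ be the moduli space of bare, perturbed holomorphic curves in $X$ with boundary on $L$, one positive puncture asymptotic to $c$, and total class $d$. For a generic choice of the perturbation scheme of \cite{Ekholm-Shende-skeins, Ekholm-Shende-ghost, Ekholm-Shende-bare}, compatible with the conical structure at infinity, the bare part of $\mathcal{M}_d(c)$ is a one-dimensional manifold whose closure $\overline{\mathcal{M}}_d(c)$ --- formed using Gromov compactness together with the SFT compactness of \cite{BEHWZ} --- is a compact one-manifold with boundary. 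Recording the boundary link in the collar of $\partial L$, marked at $c$, with its weight and Euler-characteristic power, the signed count of the boundary points, taken in $\Sk(\partial L; c_\pm)\otimes\mathbb{Q}[[z]]$ (with scalars extended by the Novikov ring $\mathbb{Q}[H_2(X,L)]$), vanishes; summing over $d$ then yields an identity among the contributions of the various boundary strata.

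The first step is to classify the codimension-one strata of $\partial\overline{\mathcal{M}}_d(c)$. They are of two kinds: (i) interior breakings, where a node develops in $X$ and the curve splits into a two-level configuration still living in $(X,L)$; and (ii) SFT breakings at infinity, where some components escape into the symplectization $(\partial X\times\R,\ \partial L\times\R)$. For (i), the defining property of the skein-valued count --- this is precisely the content of \cite{Ekholm-Shende-skeins, Ekholm-Shende-ghost, Ekholm-Shende-bare} --- is that these contributions cancel in pairs once boundaries are recorded in $\Sk$, so the type-(i) strata contribute $0$.

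The second step is to identify the type-(ii) strata, and here the hypothesis that every Reeb chord of $\partial L$ and every Reeb orbit of $\partial X$ has strictly positive index does the work: a standard symplectic-field-theory dimension count then forces any SFT building appearing in $\partial\overline{\mathcal{M}}_d(c)$ to consist of exactly one non-trivial symplectization level --- an $\R$-family of curves in $(\partial X\times\R,\ \partial L\times\R)$ with its unique positive puncture at $c$ and negative punctures at some Reeb chords and orbits --- glued along those negative ends to a rigid (dimension-zero) collection of compact curves in $(X,L)$. Indeed, positivity of all chord and orbit indices means that any additional symplectization level, or any leftover negative end, would drop the virtual dimension below that of a codimension-one stratum. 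Summing the weighted contributions of all such configurations, over all choices of the intermediate Reeb asymptotics and all homology classes, is by definition the module product $A_{\partial X,\partial L}(c)\cdot Z_{X,L}$, with $Z_{X,L}=\sum_d Z_{X,L,d}$ supplying the capping curves. Combining the two steps, $0 = 0 + A_{\partial X,\partial L}(c)\cdot Z_{X,L}$, which is the claim.

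The main obstacle is entirely on the analytic side, and is the reason the statement is imported from \cite{Ekholm-Shende-unknot} rather than reproved here: one needs the perturbation framework of \cite{Ekholm-Shende-skeins, Ekholm-Shende-ghost, Ekholm-Shende-bare} to be arranged so that it is simultaneously (a) transverse, making the bare $\mathcal{M}_d(c)$ a one-manifold; (b) compatible with neck-stretching, so that SFT gluing identifies a neighborhood of each type-(ii) stratum with the asserted product of a symplectization family and rigid compact curves; and (c) such that the interior breakings cancel in the skein with the correct signs (and, had one retained the variable $a$, framings via the auxiliary $4$-chain). Granting this package, the remaining content --- the two-step stratification and its dimension bookkeeping --- is routine.
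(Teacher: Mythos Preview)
Your approach is exactly the paper's (which in turn is the argument of \cite{Ekholm-Shende-unknot}): study the compactified one-dimensional moduli of curves with a single positive puncture at the index-one chord $c$, use the skein relations to cancel the interior breakings, and identify the SFT boundary with the module product $A_{\partial X,\partial L}(c)\cdot Z_{X,L}$.

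There is, however, a slip in your description of the SFT degeneration. You describe the symplectization level as carrying ``negative punctures at some Reeb chords and orbits'' which are ``glued along those negative ends to a rigid \ldots\ collection of compact curves in $(X,L)$''. But $Z_{X,L}$ counts puncture-free curves, so there is nothing to glue to; and $A_{\partial X,\partial L}(c)$ lives in $\Sk(\partial L;c_\pm)$, so it records curves with only the one end at $c$. The positive-index hypothesis does sharper work than you state: a rigid $\R$-family with positive end at the index-one chord $c$ can have \emph{no} negative ends at all, since each would subtract at least one from the index. Hence the two levels are not joined by any Reeb asymptotic --- the SFT limit is simply that some components of the original (possibly disconnected) curve escape entirely to the symplectization while the rest stay compact --- and $A\cdot Z$ is just the skein-module action: push the tangle boundary of the escaped piece into $L$ and take its union with the link boundaries of the compact pieces. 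Once this is corrected, your argument matches the paper's sketch.
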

\begin{remark}
    In \cite{Ekholm-Shende-unknot}, we assumed $(\partial X, \partial L)$ was contact; here it will be only stable Hamiltonian.  As this is what is required for the key compactness result \cite{BEHWZ}, the proof of Lemma \ref{why recursion}  from \cite{Ekholm-Shende-unknot} carries through without change.
\end{remark}

\begin{remark}
The Lemma should be understood as a trivial special case of a yet-to-be-constructed skein-valued SFT.  The next simplest sort of phenomena occurs when there are some index zero Reeb chords; some examples of this form have been treated \cite{Ekholm-Shende-colored, Ekholm-Longhi-Nakamura, ELS}, but so far only by ad-hoc cancellation of the terms resulting from the additional breakings.
We will explain in Section \ref{sec: why strips} below that toric CY3s other than strips will have index zero Reeb orbits; we do not presently know how to derive recursions in that context.     
\end{remark}

\subsection{Moment maps and symplectic reduction}

Let us follow the notation in Section~\ref{subsec:Toric CY and Strips}. Let \(X\) be a strip, and \(T\simeq (\bC^*)^3 \) be the open dense torus, inside which there is the real torus \(T_\bR\).  Denote the moment map of \(T_\bR\) by 
\begin{equation}\label{eq:moment map}
\mu: X \longrightarrow M_\bR \simeq \bR^3.
\end{equation}

The one dimensional cones of the corresponding fan are generated by \( \{ b_i \}\). We fix a basis of \(N\), such that all \(b_i\) live in the ``strip'' \( \{0, 1\} \times \bZ \times \{1\} \), and assume that \(b_1 = (0, 0, 1)\), \(b_2 = (1, 0, 1)\), and \(b_3 = (0, 1, 1)\).

Recall that \(X\) can be defined as a symplectic reduction~\eqref{eq:symplectic_reduction}. 
One can easily check under coordinates, the moment map~\eqref{eq:moment map} can be written as
\[
\mu: [(x_1, x_2, x_3, \cdots)] \longmapsto \left( |x_3|^2 - |x_1|^2, |x_2|^2 - |x_1|^2, |x_1|^2 \right),
\]
and the first two coordinates give the map \(\mu'\) in \eqref{eq:moment map to R2}. 

The image of \(\mu\) is a cone in \(M = N^\vee\). Now choose a direction \(w \in N\) living in the interior of \(|\triangle|\). Then take 
\[
f_w (x) = \langle w, \mu(x) \rangle
\] 
Then the ideal boundary of \(X\) is 
\[
    \partial_\infty X = f_w^{-1} (R), \qquad R \gg 0.
\]
Topologically \(X\) decomposes as
\[
X =  \{f_w(x) \le R\} \bigcup \partial_\infty X \times [0, \infty) 
\]

From another perspective, the direction \(w \in N\) gives rise to a subtorus \(S^1 \subset T_\bR\), and \(f_w\) is  the corresponding Hamiltonian function. 
The condition that \(w\) lives in the interior of \(|\triangle|\) is equivalent to \(f_w\) being proper. An example is illustrated in Figure~\ref{fig:moment map}. 
\begin{figure}
    \centering
    \begin{subfigure}[t]{.55\linewidth}
        \includegraphics[scale =.5]{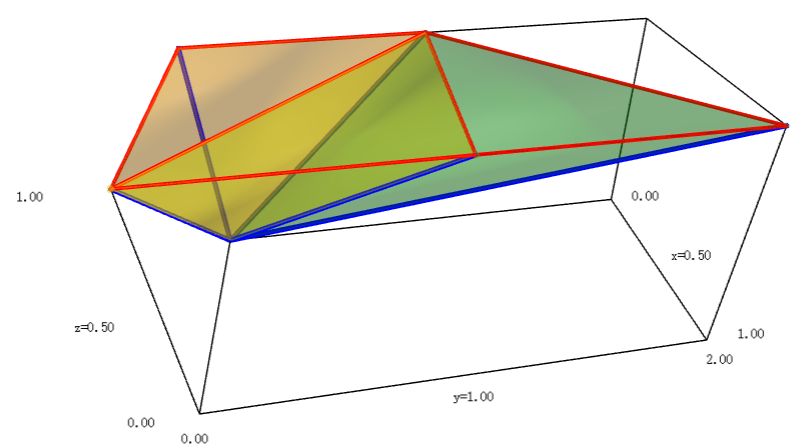}
        \caption{The toric fan}
        \label{subfig:toric fan}
    \end{subfigure}
    \begin{subfigure}[t]{.44\linewidth}
        \centering
        \includegraphics[scale = .5]{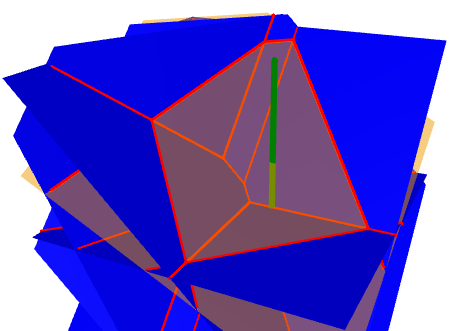}
        \caption{The moment cone. The orange plane is the slice \(\left\{m \in M \mid \langle w , m\rangle = R\right\} \), and the green ray pointing upward is the image of \(L_{AV}\). }
        \label{subfig:moment cone}
    \end{subfigure}
    \caption{An example of the moment cone}
    \label{fig:moment map}
\end{figure}

We choose the direction \(w\) to be 
\[
w = (1, 1, 2) = b_2 + b_3. 
\] 
The Hamiltonian flow of \(f_w\)  is given by:
\[
\theta : [(x_1, \cdots, x_{k+3})] \longmapsto [( x_1, e^{i \theta } x_2, e^{i \theta} x_3, \cdots, x_{k+3})]. 
\]
The quotient of \(\partial_\infty X\) by this \(S^1\)-action is the symplectic reduction presentation of a toric surface (as recalled in Section~\ref{subsec:Toric CY and Strips}):
\[
P = \partial_\infty X/ S^1
\]
whose toric data are shown in Figure~\ref{fig:the toric surface}. The one dimensional cones are generated by:
\begin{align*}
    u_i &= (i-1, i), \qquad i = 0, 1, \dots, k; \\
    u_j' &= (j, j-1), \qquad j= 0, 1, \dots, l. 
\end{align*}
where \(k+1\) and \(l+1\) are the numbers of \(b_i\) lying in \( \{0\} \times \bZ \times \{1\}\) and \( \{1\} \times \bZ \times \{1\}\), respectively.  
The toric surface \(Q\) may have a single orbifold singularity, located in the toric chart corresponding to the cone spanned by \( u_k \) and \( u'_l\).

\begin{figure}
    \centering

    \begin{subfigure}[t]{.5\linewidth}
    \centering
    \begin{tikzpicture}

        \draw[-Stealth, thick] (0, 0) -- (1, 0);
        \draw[-Stealth, thick](0, 0) -- (0, 1);
        \draw[-Stealth, thick] (0, 0) -- (0, -1);
        \draw[-Stealth, thick] (0, 0) -- (-1, 0);
        \draw[-Stealth, thick] (0, 0) -- (2, 1);
        \draw[-Stealth, thick] (0, 0) -- (1, 2);

        \draw[-Stealth, thick] (0, 0) -- (3, 4);
        \node[above left] at (3, 4) { \(u_k\)};
        \draw[-Stealth, thick] (0, 0) -- (4, 3);
        \node[anchor = north west] at (4, 3) {\(u'_l\)};

        \node[above] at (-1, 0) {\(u_0\)};
        \node[above left] at (0, 1) {\(u_1\)};
        \node[above left] at (1, 2) {\(u_2\)};

        \node[right] at (0, -1) {\(u'_0\)};
        \node[below right] at (1, 0) {\(u'_1\)};
        \node[below right] at (2, 1) {\(u'_2\)};
        
        \node at (1.5, 2.7) {\(\cdots\)};
        \node at (2.7, 1.5) {\(\cdots\)};
        
        \draw[step=1cm,gray,very thin] (-1,-1) grid (5,5);

        \node[anchor = north east] at (0,0) {\tiny \( (0, 0)\)};
        \fill[gray] (0,0) circle (2pt);
        
    \end{tikzpicture}
    
    \caption{The toric fan}
    \label{subfig:fan of toric suface}
    \end{subfigure}
    \begin{subfigure}[t]{.45\linewidth}
    \centering
    \begin{tikzpicture}[scale = .9]
        \draw[line width = 1pt] (4.5, 5.25 - 2/3) --(3.5, 5.25) -- (2, 6) -- (0, 6) -- (0, 0) -- (6, 0) -- (6,2) -- (5.25, 3.5); 
        \node at (5, 4) {\large\(\cdots\)};
        \draw[red] (1.5, 1.5) -- (1.5, 6);
        \draw[red] (1.5, 1.5) -- (6, 1.5);
        \draw[red] (1.5, 1.5) -- (3.4,5.3);
        \draw[red] (1.5, 1.5) -- (5.3,3.4);
        \draw[red] (1.5, 1.5) -- (1.5,0);
        \draw[red] (1.5, 1.5) -- (0,1.5);
        \draw[red] (1.5, 1.5) -- ((3.8,5.05);
        \fill[blue] (1.5, 1.5) circle (2pt);

        \node[below left] at (1.5, 1.5) {\color{blue} \(p\)}; 
        \node[above] at (.6, 1.5) {\color{red} \(\delta_0\)};
        \node[right] at (1.5, .6) {\color{red} \(\delta_0'\)};
        \node[below] at (4.5, 1.5) {\color{red} \(\delta_1'\)};
        \node[left] at (1.5, 4.5) {\color{red} \(\delta_1\)};
        \node at (2.5, 4.5) {\color{red} \(\delta_2\)};
        \node at (4.5, 2.5) {\color{red} \(\delta_2'\)};

        \node[below] at (3, 0) {\(D'_0\)};
        \node[left] at (0, 3) {\(D_0\)};
        \node[above] at (.9, 6) {\(D_1\)};
        \node at (3, 5.9) {\(D_2\)};
        \node[right] at (6, .9) {\(D'_1\)};
        \node at (5, 1) at (6, 3) {\(D'_2\)};
        
    \end{tikzpicture}
    
    \caption{The moment polytope}
    \label{subfig:moment polytope of toric suface}
    \end{subfigure}
    \caption{The toric surface \(P = \partial_\infty /S^1\)}
    \label{fig:the toric surface}
\end{figure}
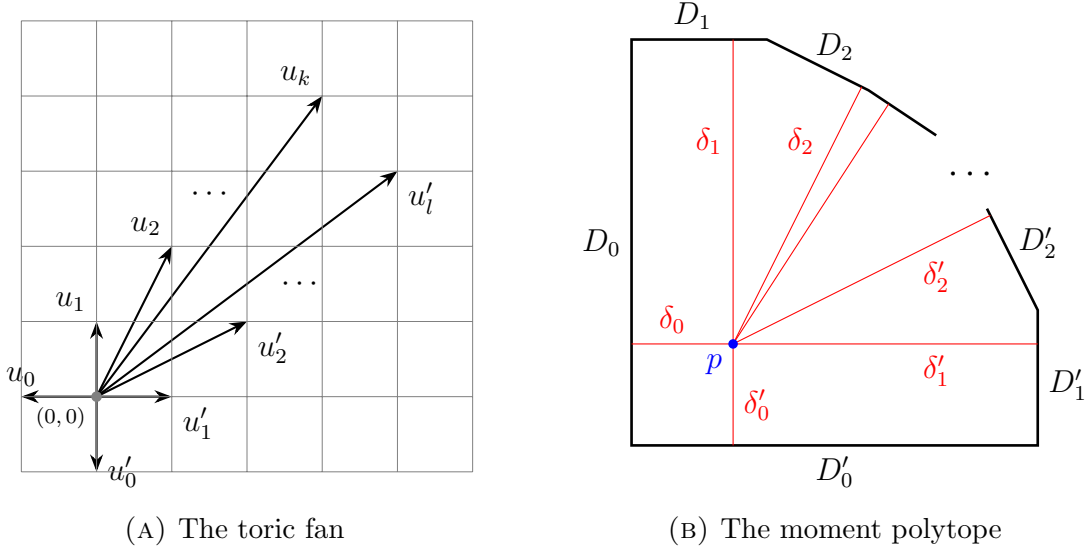

\subsection{Counting at  infinity}

We recall that $\partial_\infty X$, or indeed the total space of any principal circle bundle over a symplectic manifold, is a stable Hamiltonian structure -- where the 1-form $\alpha$ is the  connection \(1\)-form \(\alpha\) of the circle bundle, and the 2-form $\omega$ is the pullback of the symplectic form on the base.  The Reeb dynamics is just the principal $S^1$ action.  

\begin{lemma}\label{lem:c1_pi} 
The circle fibration $
     \pi: \partial_\infty X \longrightarrow P$
     has  $c_1(\pi ) = \frac{1}{2} c_1(TP)$. 
In particular, \(c_1(\pi)\) is primitive in \(H^2(P, \bZ)\). Hence all the Reeb orbits are contractible. 
\end{lemma}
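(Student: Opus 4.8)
The plan is to realize both the base $P$ and the total space $\partial_\infty X$ of the fibration as symplectic reductions of $\bC^m$ (with $m$ the number of toric rays of $X$), differing by a single extra circle, and then read off both Chern classes from the charge data. The starting point is already visible in the excerpt: for $w=b_2+b_3$ one has $f_w=\langle w,\mu\rangle=|x_2|^2+|x_3|^2$, and $\widetilde w:=(0,1,1,0,\dots,0)\in\widetilde N$ is a lift of $w$ under $\widetilde N\to N$. Hence $X=\bC^m/\!/_r\,G_\bR$, the hypersurface $\partial_\infty X=\{f_w=R\}$ sits inside it, and
\[
P \;=\; \bC^m \,/\!/_{(r,R)}\, \bigl(G_\bR\times S^1_w\bigr)
\]
is the toric surface whose charge matrix is that of $X$ with the one extra row $\widetilde w$ adjoined. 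The fibration $\pi\colon\partial_\infty X\to P$ is exactly the unit circle bundle of the line bundle $L$ on $P$ attached to the character of the last $\bC^*$-factor; in the standard identification $\mathrm{Pic}(P)\cong\bZ^{m-2}$ (valid since the unstable locus has codimension $\ge 2$) that character is a basis vector, so $c_1(\pi)=\pm c_1(L)$ is primitive for either sign.

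For the coefficient, compute $c_1(TP)=-K_P=\sum_{\text{rays}}[D_i]$. Under the above identification $[D_i]$ is the $i$-th column of the $(m-2)\times m$ charge matrix of $P$, so $-K_P$ is its vector of row sums: the first $m-3$ rows are the $\ell^{(a)}$, whose row sums vanish by the Calabi--Yau condition, and the last row is $\widetilde w$, with row sum $2$. Therefore $-K_P=(0,\dots,0,2)=\pm 2\,c_1(\pi)$, i.e.\ $c_1(\pi)=\pm\tfrac12 c_1(TP)$. The sign is pinned to $+$ either by a local model --- over a $\bP^1$ fibre such as $\{x_1=x_4=\cdots=0\}\cap\partial_\infty X$ inside the first $\bC^3$-chart, $\pi$ restricts to a Hopf fibration $S^3\to\bP^1$ on which $c_1(TP)$ evaluates to $2$ and $c_1(\pi)$ to $1$ --- or more conceptually by the splitting $TX|_{\partial_\infty X}\cong\underline{\bC}\oplus\pi^*TP$ (the trivial summand spanned by the Reeb field and its $J$-image, the other summand the contact distribution identified with $TP$ by symplectic reduction), whence $c_1(TX)=0$ forces $\pi^*c_1(TP)=0$ and the Gysin sequence of $\pi$ gives $c_1(TP)\in\bZ\cdot c_1(\pi)$, consistent with the displayed computation.

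Finally, primitivity of $c_1(\pi)$ is then immediate, and since $P$ is a compact toric (orbi-)surface it is simply connected; the homotopy exact sequence of $S^1\to\partial_\infty X\to P$ shows $\pi_1(\partial_\infty X)$ is cyclic, generated by the fibre, of order equal to the divisibility of $c_1(\pi)$ --- namely $1$ --- so $\partial_\infty X$ is simply connected and in particular every Reeb orbit is contractible. The one place I expect genuine friction is bookkeeping the (single, possible) orbifold point of $P$, so that "$\mathrm{Pic}(P)$", "$H^2(P;\bZ)$" and "circle bundle" are read compatibly --- in the orbifold case one works with the class group or orbifold cohomology and an orbi-bundle --- together with nailing the orientation that produces $+\tfrac12$ rather than $-\tfrac12$; everything else is routine toric linear algebra.
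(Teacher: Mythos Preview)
Your argument is correct, but it takes a genuinely different route from the paper.  The paper's proof is purely topological: from the Gysin sequence of $\pi$ one gets $H^2(\partial_\infty X;\bZ)\cong H^2(P;\bZ)/\langle c_1(\pi)\rangle$, and the Calabi--Yau condition on $X$ (via the splitting $TX|_{\partial_\infty X}\cong \underline{\bC}\oplus \pi^*TP$) forces $\pi^*c_1(TP)=0$, so $c_1(TP)=k\,c_1(\pi)$ for some integer $k$.  Then a single explicit evaluation on the divisor $D_0$ gives $\langle c_1(TP),[D_0]\rangle=2$ and $\langle c_1(\pi),[D_0]\rangle=1$, fixing $k=2$ and simultaneously proving primitivity.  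In fact you essentially reproduce this argument yourself in the aside beginning ``or more conceptually by the splitting\ldots''; that \emph{is} the paper's proof.

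Your main line is instead a direct toric/GIT computation: realize $P$ as a symplectic quotient of $\bC^m$ with charge matrix equal to that of $X$ augmented by the row $\widetilde w=(0,1,1,0,\dots,0)$, read off $c_1(\pi)$ as the character of the extra circle (hence a basis vector, hence primitive), and read off $-K_P$ as the vector of row sums of the charge matrix, which is $(0,\dots,0,2)$ by the Calabi--Yau condition.  This buys you primitivity for free and makes the factor $2$ transparent as the row sum of $\widetilde w$, at the cost of the orbifold bookkeeping you flag (identifying $H^2(P;\bZ)$, the class group, and the character lattice of the enlarged torus when $P$ has its single orbifold point).  The paper's approach sidesteps that bookkeeping entirely, since the pairing $\langle c_1(\pi),[D_0]\rangle=1$ is computed away from the orbifold point and already certifies primitivity in $H^2(P;\bZ)$; it also generalizes verbatim to non-toric Calabi--Yau fillings with circle-fibered ideal boundary.
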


\begin{proof}
    From Thom--Gysin sequence
    \[
    \longrightarrow 
     H^0 (P, \bZ) \stackrel{ \cup\, c_1(\pi)}{\longrightarrow} H^2 ( P , \bZ) 
     \stackrel{\pi^*}{\longrightarrow} H^2 ( \partial_\infty X, \bZ) \longrightarrow  H^1 (P, \bZ) = 0
    \]
    we know that 
    $
    H^2 \left(\partial_\infty X, \bZ\right) = H^2 (P, \bZ)/ c_1(\pi).
    $
    Since the strip \(X\) is Calabi-Yau, we know that \(\pi^* c_1(P) = 0\). Hence \( \pi^* c_1 (P)\) must have the form \( k c_1 (\pi)\) for \(k \in \bZ \). 
    Let \(D_0\) be the divisor corresponding to \(u_0 = (-1, 0)\) as in Figure~\ref{fig:the toric surface}. One can easily compute 
    $
    \langle c_1 (P) , [D_0] \rangle = 2
    $
    while 
    $
    \langle c_1 (\pi) , [D_0] \rangle = 1.
    $
    This completes the proof. 
\end{proof}


\begin{lemma} \label{reeb positivity}
    After generic perturbation, the Reeb orbits of $\partial_\infty X$ have positive indices, as do the Reeb chords of $\partial_\infty L$.  
    In particular, Lemma \ref{why recursion} applies to an Aganagic-Vafa brane in a strip. 
\end{lemma}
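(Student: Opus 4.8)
The plan is to identify the closed Reeb orbits of $\partial_\infty X$ and the Reeb chords of $\partial_\infty L$ as Morse--Bott families, perturb them to nondegeneracy by the standard Morse--Bott recipe (as in the treatment of the unknot conormal in \cite{Ekholm-Shende-unknot}), and then compute the resulting Conley--Zehnder and Maslov indices, the positivity being forced by Lemma~\ref{lem:c1_pi}. First, the identification: since the Reeb flow on $\partial_\infty X$ is the fiberwise $S^1$-rotation of $\pi\colon\partial_\infty X\to P$, the closed Reeb orbits are exactly the $d$-fold covers of the fibers, and for each $d\ge1$ these form a Morse--Bott family diffeomorphic to $P$. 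From the explicit equations \eqref{which AV brane} for $L_{AV}$ one checks that the $S^1$-action carries a point of $\partial_\infty L$ off of it and first returns to $\partial_\infty L$ after a rotation by a half-fiber; hence the Reeb chords are the $m$-fold half-fibers, $m\ge1$, each forming a Morse--Bott family which is a torus (covered by $\partial_\infty L\cong T^2$). By Lemma~\ref{lem:c1_pi}, $c_1(\pi)=\tfrac12 c_1(TP)$ is primitive, so $c_1(\xi)=\pi^{*}c_1(TP)=2\,\pi^{*}c_1(\pi)=0$ in $H^2(\partial_\infty X;\bZ)$ for $\xi=\ker\alpha$; together with the contractibility statement of that lemma, all orbits and chords are null-homotopic and the indices below are well-defined integers, independent of the choice of capping.

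Next, perturbing the stable Hamiltonian structure near each orbit family by a Morse function on $P$ (and $L$ near each chord family by a Morse function on the chord torus) renders all orbits and chords nondegenerate, sitting over the critical points, with index equal to the Robbin--Salamon index of the Morse--Bott family corrected by a term lying in $[-\tfrac12\dim,\tfrac12\dim]$ of the family. As $\dim P=4$ and $\dim\partial_\infty L=2$, it then suffices to bound below the Robbin--Salamon index of the $d$-fold fiber family by $3$, and that of the $m$-fold half-fiber family by $2$, uniformly in $d,m\ge1$.

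The core of the argument is the index computation. For the $d$-fold fiber, the linearized Reeb return map is the identity in the trivialization of $\xi$ by horizontal lifts; comparing this with a trivialization extended over a capping disk $u$, whose projection $\pi\circ u$ defines a class $A_d\in H_2(P)$ with $\langle c_1(\pi),A_d\rangle=d$ for the orientation of the fiber fixed by the ambient geometry, the Robbin--Salamon index becomes $2\langle c_1(TP),A_d\rangle=4\langle c_1(\pi),A_d\rangle=4d$ by Lemma~\ref{lem:c1_pi}. This is positive and strictly increasing in $d$, so the perturbed orbit indices are at least $4-2=2$, and the worst case $d=1$ is seen directly. For the $m$-fold half-fiber the same return map carries the Lagrangian subspace $T(\partial_\infty L)\cap\xi$, and the relevant Maslov-type family index is ``half'' of the orbit value, namely $2m$, up to a bounded correction determined by the framing of $\partial_\infty L$; since $L$ has trivial Maslov class this correction is controlled, and one finds the family index is $\ge2$, the base case $m=1$ --- the chord underlying the index-one chord of Lemma~\ref{why recursion} --- checked by a direct local computation.

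Putting this together, every Reeb orbit of $\partial_\infty X$ and every Reeb chord of $\partial_\infty L$ has strictly positive index, so Lemma~\ref{why recursion} applies. I expect the last step to be the main obstacle: the Morse--Bott bookkeeping for the chords, and in particular pinning down the relative Maslov index of the half-fiber, requires carefully tracking the framing of $\partial_\infty L$ (which is also the ``framing'' of the topological vertex); in addition, the single orbifold point of $P$ needs a separate but routine local analysis, since the circle fibration $\pi$ there is only an orbifold fibration.
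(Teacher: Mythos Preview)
Your approach is essentially the same as the paper's: compute the Morse--Bott Conley--Zehnder index of the circle fiber via Lemma~\ref{lem:c1_pi} (the paper gets $\mu_{CZ}=4$ for the simple orbit from $2\langle c_1(TP),S\rangle=4\langle c_1(\pi),S\rangle$), then observe that Morse perturbation shifts by at most half the dimension of the base family. The paper's chord argument is in fact terser than yours---it just notes that the shortest chords are half-segments of orbits under the $2:1$ cover $\bT\to\bT_0$ and concludes---and it does not separately treat higher iterates or the orbifold point, so the obstacles you flag at the end are places where you are being more careful than the paper itself.
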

\begin{proof}
    It is well known how to compute the Conley-Zehnder indices for a prequantization bundle; see e.g. an exposition in \cite{VK15} and  \cite{Hong19} where the orbifold case is worked out in detail. 
    The same argument will work here.

    Since \(c_1 (\pi)\) is primitive, we may pick a homology class $H$ with 
    $c_1(\pi) \cap H = 1$. 
    Since $P$ is simply connected, \(H_2 (P, \bZ) \cong \pi_2(P)\), so we may represent $H$ by a map from a sphere $i: S \to P$, and to pass through any given point $q$.  
    Now 
    consider the orbit \(\gamma_q\) lying over a point \(q \in P\).
    The map $S \to P$ lifts to a map from a closed disk
    $
    \tilde{i}: D \longrightarrow \partial_{\infty} X
    $; note $\tilde{i}(\partial D)$ is the \(S^1\) orbit over \(q\). By \cite[Section 2.7]{McDuff-Salamon}, we have
    $$
        \mu_{CZ} (\gamma_q)
        = 2\langle \iota^* c_1 (\xi) , S \rangle \\
        = 2\left\langle \iota^*c_1 (  TP) , S \right \rangle \\
        \stackrel{\text{Lemma~\ref{lem:c1_pi}}}{{=\joinrel=\joinrel=\joinrel=\joinrel=}} 4 \langle c_1 (\pi), \iota_*[S] \rangle \\
         = 4
    $$

    Finally, the perturbation by a Morse function decreases the Conley-Zehnder index by at most half the dimension of the base manifold (see for example \cite[Lemma 2.4]{VK15}), in this case $2$.  Thus the indices remain positive after perturbation.
   

    Now we consider the chords.  The Aganagic--Vafa brane \(L_{AV}\) intersects the ideal boundary along a  torus  \(\bT\). The 
    \(S^1\)-quotient induces a two-to-one covering from to a Lagrangian torus  which is a fiber of the moment map: \( \bT \xrightarrow{2:1} \bT_0 \subset P\).  Before  perturbing, the shortest Reeb chords are equally spaced segments of Reeb orbits.  It follows that the perturbed chords must all have positive index if the orbits do. 
\end{proof}


\begin{theorem}\label{thm:counting at infinity}
     In the symplectization $(\R \times \partial_\infty X, \R \times \partial_\infty L)$,
     the skein-valued count of holomorphic curves with one positive puncture at the (perturbed) index one Reeb chord of $L$ is given by Equation    
    \eqref{eq: recursion relation}.
\end{theorem}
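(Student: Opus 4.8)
The plan is to compute the count in question, call it $A(c)\in\Sk(T^2,c_\pm)$, by pushing the relevant holomorphic curves down to the toric surface $P=\partial_\infty X/S^1$ and enumerating holomorphic disks on a moment-map torus fibre there. Since $\pi\colon\partial_\infty X\to P$ is the prequantization circle bundle of Lemma~\ref{lem:c1_pi} and the Reeb flow is fibrewise rotation, any finite-energy punctured curve in the symplectization $(\mathbb{R}\times\partial_\infty X,\ \mathbb{R}\times\partial_\infty L)$ projects to a $J$-holomorphic map into $P$ with boundary on the Lagrangian torus fibre $\mathbb{T}_0\subset P$; recall from the proof of Lemma~\ref{reeb positivity} that $\partial_\infty L$ meets $\partial_\infty X$ in a torus $\mathbb{T}$ double-covering $\mathbb{T}_0$, and that the index-one chord $c$ is the short arc of a Reeb fibre joining the two preimages $p_1,p_2\in\mathbb{T}$ of a point $\bar p\in\mathbb{T}_0$. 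The positive puncture at $c$ therefore projects to a boundary marked point of the disk at $\bar p$. Because all Reeb orbits of $\partial_\infty X$ and chords of $\partial_\infty L$ have strictly positive index (Lemma~\ref{reeb positivity}), and the disks downstairs have Maslov index at least $2$, an index-one configuration can carry no negative punctures, so the SFT compactification produces no interior Reeb breaking: $A(c)$ is a count of one-level punctured disks, governed via $\pi$ and the cover $\mathbb{T}\to\mathbb{T}_0$ by the holomorphic disks in $P$ through $\bar p$.

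I would then run the classification of holomorphic disks on a toric moment fibre --- as used for the counts at infinity in \cite{ELS} --- for $P$, whose fan (rays $u_i=(i-1,i)$ and $u_j'=(j,j-1)$, with at most one orbifold cone) is recorded in Figure~\ref{fig:the toric surface}. Through the generic point $\bar p$ the rigid disks are the Maslov-two disks, one per toric boundary divisor of $P$, together with the nodal configurations obtained by attaching chains of the $(-2)$-curves among the $D_i$ (which occur since $P$ is not Fano and likewise contribute to the skein-valued count). For each such disk I would read off its boundary class in $H_1(\mathbb{T}_0)$ --- the corresponding primitive ray vector --- and its symplectic area, which in the moment polytope of Figure~\ref{fig:the toric surface} is a sum of K\"ahler parameters of $P$ that telescopes, after identifying the K\"ahler data of $P$ and $X$, into a monomial $Q_{1,i}=\alpha_i$ or $Q_{1,j}=\beta_j$. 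Lifting the boundary loop through the double cover $\mathbb{T}\to\mathbb{T}_0$ produces an arc in $\mathbb{T}$ from $p_1$ to $p_2$ whose longitudinal winding is determined by the ray vector; I expect the divisors $D_i$ to yield the arcs $X_k$ of Figure~\ref{fig:skeins on torus} and the $D_j'$ to yield the arcs $Y_l$.

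Assembling $A(c)$ then amounts to: recording the factor $z^{-\chi}$ of each once-punctured (possibly nodal) disk, which matches the normalization of $X_k,Y_l$; attaching the Novikov monomials from the area computation; and determining the sign from the orientation of the moduli induced by the spin structure on $L$, where I expect the $D_i$-disks and the $D_j'$-disks to contribute with opposite overall sign --- the origin of the minus in \eqref{eq: recursion relation}. Collecting contributions of a fixed winding $k$ (resp.\ $l$) and summing the Novikov weights over the relevant nodal families should reproduce exactly the coefficient $A_k$ (resp.\ $B_l$), that is the identity $\prod_i(1-\alpha_i x)=\sum_i A_i x^i$ with its elementary symmetric functions; this is consistent with \cite{PS19} and with the dequantized and $U(1)$ limits recorded after Theorem~\ref{solving the recursion}, where \eqref{eq: mirror curve} appears as the mirror curve of the strip.

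The hard part will be making this last step rigorous: showing the enumeration at infinity is complete and that the weights combine into the elementary symmetric functions with the right signs. This requires (i) excluding contributions from higher-genus curves, from curves with several boundary components, and from disks through the orbifold point of $P$ --- expected from positivity of all Reeb indices together with regularity/automatic transversality for Maslov-two disks in a toric surface, but needing care in the orbifold chart --- and (ii) carrying out the non-Fano bubbling analysis in $P$, together with the area-to-K\"ahler-parameter and lift-to-$\mathbb{T}$ winding dictionaries, precisely enough to match the combinatorics of $A_i$ and $B_j$. Theorem~\ref{thm: topological vertex for strips} gives an eventual consistency check, but the geometric derivation of the coefficients is the real content.
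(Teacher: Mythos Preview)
Your outline is correct and follows essentially the same route as the paper: project to the reduction $P=\partial_\infty X/S^1$, enumerate Maslov-two disks on the moment torus $\mathbb{T}_0$ together with their $(-2)$-sphere bubbles, and lift boundaries through the double cover $\mathbb{T}\to\mathbb{T}_0$ to recover the arcs $X_k,Y_l$.

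For the ``hard parts'' you flag, the paper resolves them by citing outside results rather than by direct analysis. For (i), the orbifold contribution is excluded by \cite[Thm.~5.2]{CP14}, and the divisors $D_0,D_0'$ are excluded because the only compact boundary spheres available as bubbles are the $(-2)$-curves, which do not include them. This lets one separate the count into two \emph{noncompact} smooth toric surfaces (the fans on the $u_i$ side and the $u_j'$ side respectively), avoiding the orbifold chart entirely. For (ii), the non-Fano bubbling enumeration in each of these surfaces --- disks plus chains of $(-2)$-spheres, with the correct Kuranishi multiplicities --- is not carried out from scratch but quoted from \cite[Thm.~4.1]{LLW}, which gives directly the superpotential
\[
W = z_1\prod_{i=1}^k\Bigl(1+\tfrac{1}{z_1 z_2}\prod_{a=1}^i q_a\Bigr) + z_2\prod_{j=1}^l\Bigl(1+\tfrac{1}{z_1 z_2}\prod_{b=1}^j q_b'\Bigr),
\]
whose monomials after expansion are exactly the disks with their Novikov weights and boundary classes. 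The identification $\alpha_i=\prod_{a\le i}q_a$, $\beta_j=\prod_{b\le j}q_b'$ and the sign flip $z_2\mapsto -z_2$ (spin structure) then match this to \eqref{eq: recursion relation}. So your plan is right; the missing ingredients are precisely these two citations.
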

\begin{proof}
    The count in question is of the $\R$-invariant families of curves, and so is computed by counting in the symplectic reduction to $P := \partial_\infty X / S^1$.  We write $\bT_0 \subset P$ for the image of $\bT$. 

    In this case, \(P\) is a semi-Fano (orbifold) toric surface. 
    Consider the moment polytope of \(P\),  as in Figure~\ref{subfig:moment polytope of toric suface}. The Lagrangian \(\bT_0\) is the preimage of the blue point \textcolor{blue}{\(p\)}. 
    
    One sees immediately that the only possible embedded rigid curves are the Maslov $2$ disks 
    lying over each \textcolor{red}{red ray}, starting at \(p\) and perpendicular to a boundary divisor.     
    Denote the disk corresponding to the divisor \(D_i\) (resp. \(D'_i\)) by \(\delta_i\)
    (resp. \(\delta'_i\)). 
    One sees directly that the boundary of $\delta_i$ (resp. \(\delta_i'\))  lifts to the curve \(X_i\) (resp. \(Y_i\)) on $\bT$. 
        
    One sees by numerical considerations that the only possible rigid (after perturbation) contributions must be of genus zero, and can only arise by gluing these embedded disks to spheres with self intersection $\le -2$  contained in the boundary divisor (in this case, the only such sphere have self intersection $-2$).  This excludes the divisors $D_0, D_0'$, and also, by \cite[Thm. 5.2]{CP14}, the  boundary components touching the orbifold point. 

    We now consider the remaining allowable curves.  They are not strictly speaking admissible in the skein-valued curve counting formalism (as the curves are not embedded, and may not be transversely cut out in moduli), but as they already have embedded boundary in $\bT_0$, the curves which result after perturbation will have the same boundary,  and the number of such curves can be computed using the Kuranishi structure on the unperturbed moduli spaces.  

    \begin{figure}
    \begin{tikzpicture}[scale =.8]

        \draw[-Stealth, thick](0, 0) -- (0, 1);
        \draw[-Stealth, thick] (0, 0) -- (-1, 0);
        \draw[-Stealth, thick] (0, 0) -- (1, 2);

        \draw[-Stealth, thick] (0, 0) -- (3, 4);
        \node[above left] at (3, 4) { \(u_k\)};

        \node[above] at (-1, 0) {\(u_0\)};
        \node[above left] at (0, 1) {\(u_1\)};
        \node[above left] at (1, 2) {\(u_2\)};

        \node at (1.5, 2.7) {\(\cdots\)};

        \draw[step=1cm,gray,very thin] (-1,-1) grid (4,4);

        \fill[gray] (0,0) circle (2pt);

        \begin{scope}[shift = {(6, 0)}]
               \draw[-Stealth, thick] (0, 0) -- (1, 0);
   
        \draw[-Stealth, thick] (0, 0) -- (0, -1);
        \draw[-Stealth, thick] (0, 0) -- (2, 1);

        \draw[-Stealth, thick] (0, 0) -- (4, 3);
        \node[anchor = north west] at (4, 3) {\(u'_l\)};

        \node[right] at (0, -1) {\(u'_0\)};
        \node[below right] at (1, 0) {\(u'_1\)};
        \node[below right] at (2, 1) {\(u'_2\)};
    
        \node at (2.7, 1.5) {\(\cdots\)};
        
        \draw[step=1cm,gray,very thin] (-1,-1) grid (4,4);

        \fill[gray] (0,0) circle (2pt);
        \end{scope}
    \end{tikzpicture}
    \caption{The fans of two toric surfaces}
    \label{fig:two toric surfaces}
    \end{figure}
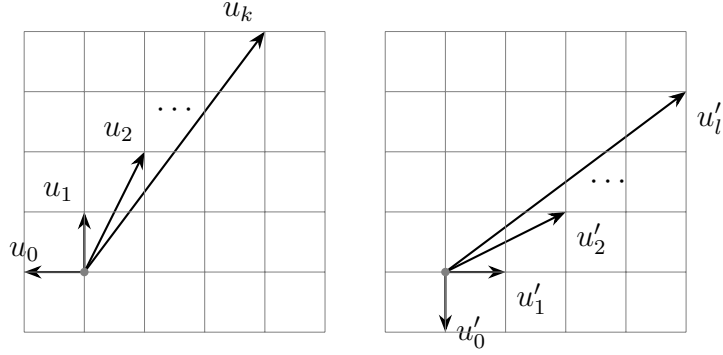

    Because we have know that the curves cannot touch the orbifold point or \(D_0 \cap D_0'\), the curve count is the sum of the corresponding counts in two noncompact toric surfaces, namely those with the divisors $D$ and the divisors $D'$, respectively. The corresponding fans of these two toric surfaces are illustrated in Figure~\ref{fig:two toric surfaces}.
    These counts have been determined (by expressing the Kuranishi structures on the relevant moduli in terms of the corresponding structures for moduli of closed curves) in \cite[Theorem 4.1]{LLW}.  We combine the contributions from the two noncompact surfaces to arrive at the formula:
    \begin{equation} \label{superpotential}
        W = z_1 \prod_{i = 1}^{k} \left( 1 + \frac{1}{z_1 z_2}  \prod_{a = 1}^{i} q_a\right) + z_2 \prod_{j = 1}^{l} \left( 1 + \frac{1}{z_1 z_2} \prod_{b = 1}^{j} q'_b \right).
    \end{equation}
    The meaning of the formula is that each monomial after expansion is a contribution of 
    a holomorphic disk, where the $q_i, q'_i$ variables record the homology contributions of $D_i$ and $D_i'$, and the powers of $z_1, z_2$ record the homology class  of the boundary in $\bT_0$. 

    This determines the numerical contributions of all the disks; we have already remarked that the corresponding boundaries are the $X_i, Y_j$.  It remains only to compare Equations \eqref{superpotential} and \eqref{eq: recursion relation}.  Ours differs by changing $z_2 \to -z_2$, which is a consequence of a different spin structure choice.  One can check that \(\alpha_i\) corresponds to \(\prod_{a=1}^i q_a\) and \(\beta_j\) corresponds to \(\prod_{b = 1}^j q_b'\) by a direct homology computation. 

\end{proof}

\begin{remark}
    Let us recall the crepant resolution conjecture \cite{Ruan-crepant, Bryan-Graber-crepant}: given two crepant resolutions of a given singularity, the Gromov-Witten invariants should agree, possibly up to some change of variable.  An open variant is proposed in \cite{Brini-Cavalieri-Ross-crepant}. 
    
    Let us observe that in the setting of Lemma \ref{why recursion}, 
    if two Calabi-Yau 3-folds $X, X'$ are resolutions of some space with compact singularity locus, we  have tautologically $A_{\partial X, \partial L}(c) = A_{\partial X', \partial L}(c)$, since $(\partial X, \partial L) = (\partial X', \partial L')$.  Thus, when \eqref{boundary equation} determines $Z_{X, L}$, we will also have $Z_{X, L} = Z_{X', L}$. 
    (The identification  $H^2(X) \to  H^2(\partial X) \leftarrow H^2(X')$ will however induce a change of variable in formulas.)

    The CY3 strips we study are all resolutions of the corresponding singular toric CY3 whose fan consists of a single cone (on the trapezoid whose subdivision is dual to the FTCY).  Thus Lemma \ref{reeb positivity}, Theorem \ref{thm:counting at infinity}, and Theorem \ref{solving the recursion} together verify the crepant resolution invariance of the all genus, skein-valued curve counts in this setup.  
\end{remark}

\subsection{A different Lagrangian filling}\label{sec:the_other_brane}

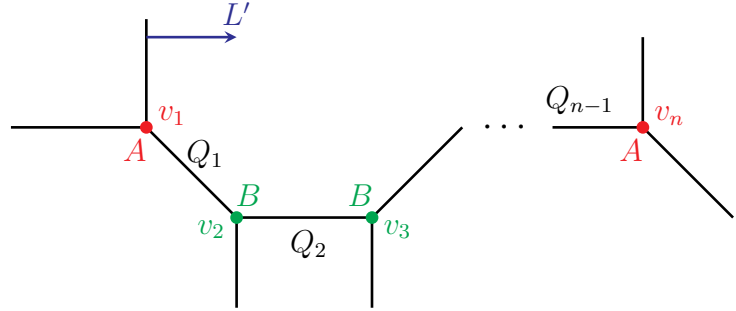
\begin{figure}
    \centering
 
\begin{tikzpicture}[every path/.style={line width=1pt}, xscale = 1.2, yscale = 1.2]
    
    \draw (-1.5, 0) -- (0, 0) -- (0, 1.2);
    \draw (0,0) -- (1, -1); 
    \draw (1, -1) -- (1, -1); 
    \draw (1, -1) -- (1, -2);
    \draw (1, -1) -- (2.5, -1);
    \draw (2.5, -1) -- (2.5, -2);
    \draw (2.5, -1) -- (3.5, 0);

    \draw[Blue, -stealth] (0, 1) -- (1, 1);
    
    \node[above] at (1, 1) {\textcolor{Blue}{\(L'\)}};
       
    \node at (4, 0) {\large $\cdots$} ;

    \draw (4.5, 0) -- (5.5, 0) -- (6.5, -1);
    \draw (5.5, 0) -- (5.5, 1);

     \node (A)  at (0, 0) {};
    \fill[Red] (A) circle (2pt);
    \node [anchor = north] at (A.west) {\color{Red} \(A\) };
    \node [anchor = west] at (A.north) {\color{Red} \(v_1\)};
    \node (B) at (1, -1) {};
    \fill[Green] (B) circle (2pt);
    \node [anchor = south] at (B.east) { \color{Green} \(B\)}; 
    \node [anchor = east] at (B.south) {\color{Green} \(v_2\)};
    \node (B1) at (2.5, -1) {};
    \fill[Green] (B1) circle (2pt);
    \node [anchor = south] at (B1.west) {\color{Green} \(B\)}; 
    \node [anchor = west] at (B1.south) {\color{Green} \(v_3\)};

    \node (An) at (5.5, 0) {};
    \fill[Red] (An) circle (2pt);
    \node [anchor = north] at (An.west) {\color{Red} \(A\)}; 
    \node [anchor = west] at (An.north) {\color{Red} \(v_n\)};

    \node at (.65, -.3) {$Q_1$};
    \node at (1.8, -1.3) {\(Q_2\)};
    \node at (4.8, .3) {\(Q_{n-1}\)}; 
    
    \end{tikzpicture}

    \caption{A different Aganagic--Vafa brane}
    \label{fig:different brane}
\end{figure}
\begin{figure}
\centering
\begin{tikzpicture}[every path/.style={line width=1.1pt}, rotate=270, scale = .7]
 \draw[black] (0,0) rectangle (6,6);

  \draw[Red, mid arrow = .85] 
    (3,3) .. controls (3,4) and (2.8,5) .. (2.7,6);
  \draw[Red, mid arrow=0.75] 
    (2.7,0) -- (2.1,6);
  \draw[Red, mid arrow=. 75] 
    (2.1,0) -- (1.5,6);
  \draw[Red, mid arrow=.75] 
    (.7,0) .. controls (.5,2) and (0, 5) .. (0, 6);
  \node at (1,4) {\large \color{Red} \(\cdots\)};
  \node[right] at (2,6.1) {\color{Red} \(Y'_l\)};

\begin{scope}[shift={(6,0)}, xscale=-1]
     \draw[Green, mid arrow = .85] 
    (3,3) .. controls (3,4) and (2.8,5) .. (2.7,6);
  \draw[Green, mid arrow=0.75] 
    (2.7,0) -- (2.1,6);
  \draw[Green, mid arrow=. 75] 
    (2.1,0) -- (1.5,6);
  \draw[Green, mid arrow=.75] 
    (.7,0) .. controls (.5,2) and (0, 5) .. (0, 6);
  \node at (1,4) {\large \color{Green} \(\cdots\)};
  \node[right] at (2,6.1) {\color{Green} \(X'_k\)};
\end{scope}



  \node[draw=blue, circle, fill = blue, inner sep=1.5pt] at (0, 0) {};
  \node[draw=blue,  circle, fill = blue, inner sep=1.5pt] at (3, 3) {};

 \node[left] at (0, 0) {\color{blue} \large \(c_-\)};
  \node[left] at (3, 3) {\color{blue} \large\(c_+\)};

\end{tikzpicture}
    \caption{The torus \(\bT\) as the boundary of \(L'\)}
     \label{fig:boundary of L'}
\end{figure}
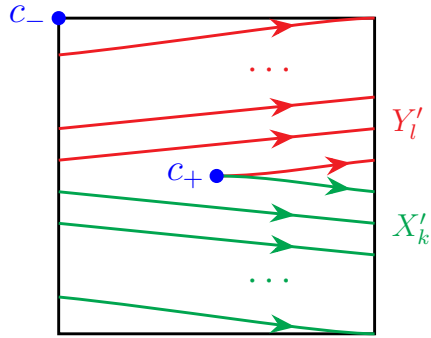

We can also consider the Aganagic--Vafa brane \(L'\) placed on another outer leg, as shown in Figure~\ref{fig:different brane}. This is the brane studied in \cite{IKP, PS19, Banerjee-Hock}.

The brane \(L'\) is an asymptotic Lagrangian filling of the torus-at-infinity \(\bT\). Hence the curve counting problem at infinity is the same.
We continue to adopt the convention that the \((1,0)\) circle is filled. Under this convention, the boundaries of the \(J\)-holomorphic disks---denoted by \(X'_k\) and \(Y'_l\)---are rotated by \(90^\circ\), as illustrated in Figure~\ref{fig:boundary of L'}.

\begin{corollary}\label{thm:recursion of L'}
    Let \(Z_{L'}\) be skein-valued counting of \(L'\). Then the skein-valued counting for \(L'\) satisfies the skein equation: 
    \begin{equation}\label{eq:skein_recursion_L'}
        \left( \sum_{i \ge 0} A_i X'_i - \sum_{j \ge 0} B_j Y'_j.  \right) \cdot Z_{X, L'} = 0. 
    \end{equation}
\end{corollary}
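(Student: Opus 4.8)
The plan is to reduce Corollary \ref{thm:recursion of L'} to the combination of Lemma \ref{why recursion} and Theorem \ref{thm:counting at infinity}, exactly as was done for the brane $L$ of \eqref{which AV brane}. The crucial observation, already recorded in the text preceding the statement, is that $L$ and $L'$ are two asymptotic Lagrangian fillings of \emph{the same} torus-at-infinity $\bT \subset \partial_\infty X$, with the same stable Hamiltonian structure on $\partial_\infty X$. Hence the count-at-infinity $A_{\partial_\infty X, \partial_\infty L'}(c)$ of $\R$-invariant families of curves with one positive puncture at the index-one Reeb chord $c$ is literally the same moduli problem as for $L$, so Theorem \ref{thm:counting at infinity} applies verbatim and computes it to be the operator $\sum_i A_i X_i - \sum_j B_j Y_j$ of \eqref{eq: recursion relation} — but now read off in the framing/coordinates appropriate to $L'$. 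Concretely, the only change is that the identification of $\partial L'$ with the standard solid-torus boundary differs from that of $\partial L$ by the $GL(2,\bZ)$ element rotating the picture by $90^\circ$; under this identification the boundaries of the Maslov-$2$ disks $\delta_i, \delta_i'$ computed in the proof of Theorem \ref{thm:counting at infinity} become the curves $X_i'$ and $Y_j'$ of Figure \ref{fig:boundary of L'} rather than $X_i$ and $Y_j$. So I would first state this: the count at infinity for $L'$, expressed in the $(1,0)$-filled convention, equals $\sum_{i\ge 0} A_i X_i' - \sum_{j\ge 0} B_j Y_j'$.

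Next I would verify the hypotheses of Lemma \ref{why recursion} for the pair $(X, L')$. Since $\partial_\infty X$ and its stable Hamiltonian structure are unchanged, the Reeb-orbit positivity is still Lemma \ref{reeb positivity}. For the Reeb chords of $\partial_\infty L'$: the argument in the proof of Lemma \ref{reeb positivity} only used that $L_{AV}$ meets $\partial_\infty X$ in a torus $\bT$ which double-covers a moment-map-fiber Lagrangian torus $\bT_0 \subset P$, so that the shortest Reeb chords before perturbation are equally spaced arcs of Reeb orbits; this is a feature of the torus-at-infinity $\bT$ and is independent of which filling we choose. Hence the chords of $\partial_\infty L'$ are also of positive index after perturbation, and Lemma \ref{why recursion} applies to $(X, L')$ with $A_{\partial_\infty X, \partial_\infty L'}(c) = \sum_{i\ge 0} A_i X_i' - \sum_{j\ge 0} B_j Y_j'$.

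Finally, Lemma \ref{why recursion} gives precisely $\bigl(\sum_{i\ge 0} A_i X_i' - \sum_{j\ge 0} B_j Y_j'\bigr)\cdot Z_{X, L'} = 0$, which is \eqref{eq:skein_recursion_L'}. I expect the only real subtlety — the ``main obstacle'' — to be bookkeeping the coordinate change: one must make sure that the $90^\circ$ rotation relating the two fillings is applied consistently to the disk boundaries, to the meridian/longitude conventions entering Lemma \ref{mutation lemma}, and to the framing, so that the operators $X_k', Y_l'$ appearing in \eqref{eq:skein_recursion_L'} really are the images of the $X_k, Y_l$ under that rotation and the $A_i, B_j$ are unchanged. (Unlike the case of $L$, there is no claim here that this $Z_{X,L'}$ is pinned down as an explicit product of skein dilogarithms — Theorem \ref{solving the recursion} would only do so after capping with a path $p$ to pass to $\Sk(\bfT)$, and the remark following that theorem notes the resulting asymmetry; so I would not assert uniqueness of the solution in the corollary, only that $Z_{X,L'}$ satisfies the stated equation.)
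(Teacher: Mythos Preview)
Your proposal is correct and follows essentially the same approach as the paper: the corollary is stated with a bare \qed, relying on the preceding paragraph that $L'$ fills the same torus-at-infinity $\bT$ (so the count at infinity is unchanged and Theorem~\ref{thm:counting at infinity} applies), with the $90^\circ$ rotation converting $X_k, Y_l$ into $X'_k, Y'_l$, after which Lemma~\ref{why recursion} gives \eqref{eq:skein_recursion_L'}. Your write-up is in fact more detailed than the paper's, explicitly re-verifying the Reeb-index hypotheses for $L'$ and correctly noting that no uniqueness claim is being made here.
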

\qed

One has corresponding assertions  for the other fillings.  However, by contrast with Theorem~\ref{solving the recursion}, we do not know finite multiple cover formulas for the counts for \(L'\) or other fillings.


\subsection{Beyond strips?} \label{sec: why strips}
If the FTCY has a loop, or equivalently, the triangulation defining \(X\) has a interior integral point, it is believed that the relationship between the partition function and quantum should be significantly more involved, and may involve new `nonperturbative' invariants \cite{Grassi-Hatsuda-Marino, Codesido-Grassi-Marino, Kashaev-Marino, Rella-resurgence, Rella-Fantini, Francois-Grassi}.

Let us explain from the point of view of Lemma \ref{why recursion} why this should be the case, and what (from this viewpoint) the new invariants must be. 
The point is that the FTCY has a loop if and only if the ideal boundary  \(\partial_\infty X\) has an index zero Reeb orbit. Thus Lemma \ref{why recursion} no longer applies.  Instead, the curve count at infinity gives a relation between the count of compact curves and all counts of curves with possible asymptotic ends at the Reeb orbit.  

One may correspondingly expect that after some elimination theory, the quantum curve will annihilate some combination of the count of compact curves with the counts of curves asymptotic to the Reeb orbit.  (Thus these must be the new invariants.  It is also reasonable that these invariants would be non-perturbative in the K\"ahler parameters, since these curves would in a naive sense have infinite energy.)

  \begin{figure}
    \centering
     
    \begin{subfigure}[t]{.49\linewidth}
    \centering
    \begin{tikzpicture}[line width = 1pt, scale = 1.5]
            \draw (0, 0) -- (1, 0) -- (0, -1) -- (-1, 1) -- (0, 0) -- (0, -1);
            \draw (-1, 1) -- (1, 0);

            \draw[Purple] (.3, -.3) -- (.3, .5) -- (-.5, -.3) -- cycle ;
            \draw[Purple] (.3, -.3) -- (1, -1);
            \draw[Purple] (.3, .5) -- (.8, 1.5);
            \draw[Purple] (-.5, -.3) -- (-1.5, -.8); 
            
        \end{tikzpicture}
        \caption{The FTCY(\textcolor{Purple}{purple}) and the dual triangulation}
    \end{subfigure}
    \begin{subfigure}[t]{.49\linewidth}
        \centering
         \includegraphics[scale = .5]{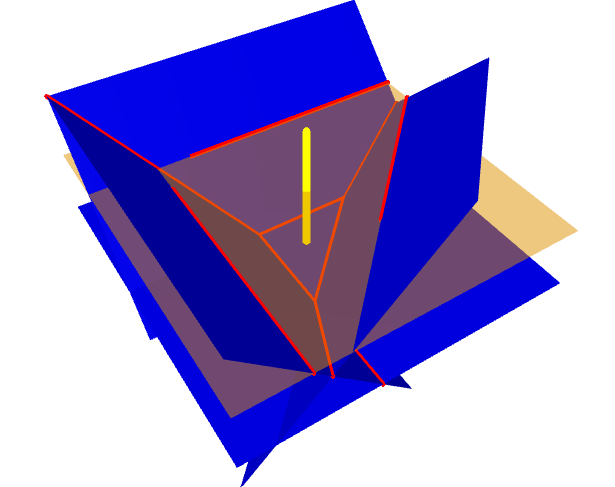}
         \caption{The moment map. The yellow ray is the image of a \(J\)-holomorphic disk.}
         \label{subfig:moment_cone_of_KP2}
    \end{subfigure}
       
        \caption{The toric data of \(K_{\bP^2}\)}
        \label{fig:toric_KP2}
    \end{figure}
    
    \begin{figure}
        \centering
        \begin{tikzpicture}[line width = 1pt]
            \draw (0,0) -- (4, 0) -- (4, 4) -- (0, 4) -- cycle;
            
            \draw[Green, mid arrow] (2, 2) -- (4, 2);
            \draw[Green, mid arrow] (0,2) -- (2,2); 
            \draw[Green, mid arrow] (2, 2) -- (2, 4);
            \draw[Green, mid arrow] (2, 0) -- (2, 2); 
            \draw[Green, mid arrow] (2, 2) -- (0, 0);
            \draw[Green, mid arrow] (4, 4) -- (2, 2);
            \node[right] at (4, 2) {\color{Green} \(P_{1,0}\) };
            \node[below left] at (2, 4) {\color{Green} \(P_{0, 1}\) };
            \node[above left] at (0,0) { \color{Green} \(P_{-1, -1}\)};

            \node[draw=blue, circle, fill = blue, inner sep=1.5pt] at (2, 2){};
        \end{tikzpicture}
        \caption{Boundaries of \(J\)-holomorphic disks in \(\bT = \partial_\infty L_{AV}\subset K_{\bP^2}\)}
        \label{fig:curves_at_infinity_KP2}
    \end{figure}
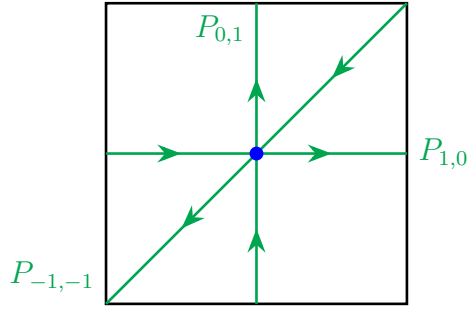

We do not (yet) know how to perform this elimination and rearrangement.  But let us at least illustrate the geometry in the case of local $\mathbb{P}^2$.

    The toric data  is given in Figure~\ref{fig:toric_KP2}. We take the vector \(w = (0 ,0, 1)\). Then the ideal boundary is a contact manifold, isomorphic to the lens space:
    \[
    \partial_\infty X = f_w^{-1}(R) \cong S^5/(\bZ/3).
    \]
    There is a rigid \(J\)-holomorphic curve, lying over the the vertical ray illustrated in yellow in Figure~\ref{subfig:moment_cone_of_KP2}.  Its boundary is an index zero Reeb orbit.

    If we take an Aganagic-Vafa brane \(L_{AV}\), the Legendrian torus \(\bT = \partial_\infty L_{AV}\) still has only positive index Reeb chords.  The rigid disks with one positive puncture can be counted after projection along the symplectic reduction
    \[
     S^5/(\bZ/3) \longrightarrow \bC\bP^2,
    \]
    where their images will end on the standard Clifford torus in \(\bC\bP^2\).  There are three such embedded holomorphic disks, and their boundaries are 
    \(P_{0,1}\), \(P_{1, 0} \), and \(P_{-1, -1}\), as
    drawn in Figure~\ref{fig:curves_at_infinity_KP2}.
    These skein elements have homology classes matching the nonconstant terms in the equation for the mirror curve of \(K_{\bP^2}\):
    \[
        1 +  x + y + \frac{Q}{xy},
    \]


\section{Topological vertex calculations}

In this section, we determine the two-leg topological vertex partition function of a strip, and show that it can be organized via a multiple cover formula. 

\subsection{The topological vertex}\label{subsec:top_vertex}

We write $\Lambda := \mathbb{Z}[x_1, x_2, \cdots]^{\fS_\infty}$ for the ring of symmetric functions; recall it has a linear basis given by the Schur functions $s_\lambda$.  We will often extend scalars or complete the ring of symmetric functions without changing notation. 

Recall the \emph{skew Schur polynomials} \(s_{\lambda/\mu}\), defined by
\[
\langle s_{\lambda/\mu}, s_{\nu} \rangle = \langle s_{\lambda} , s_\mu s_\nu \rangle
\]
where \(\langle \ , \ \rangle\) is the standard inner product for symmetric polynomials.
Let
\begin{align*}
\kappa_\lambda &= \sum_i \lambda_i (\lambda_i - 2i + 1),\\
\rho &= (q^{-1/2}, q^{-3/2}, q^{-5/2}, \dots).
\end{align*}
Note that \(\kappa_\lambda = - \kappa_{\lambda^t}\). 

By definition \cite{AKMV}, the topological vertex is (after some rearrangement by \cite{Okounkov-Reshetikhin-Vafa}): 

\begin{equation}
    \label{the vertex}
    C_{\mu_1 \mu_2 \mu_3}
    := q^{\,\kappa_{\mu_3}/2}\,
   s_{\mu_2}(q^\rho)
   \sum_{\eta}
   s_{\mu_1/\eta}\!\bigl(q^{\,\mu_2^t + \rho}\bigr)
   s_{\mu_3^t/\eta}\!\bigl(q^{\,\mu_2 + \rho}\bigr).
\end{equation}
One introduces also the following `framing' corrections: 
\begin{equation}
    \label{eq:vertex_with_framing}
    C_{\mu_1 \mu_2 \mu_3}^{(f_1, f_2, f_3)}
    := q^{\,f_1 \kappa_{\mu_1}/2 + f_2 \kappa_{\mu_2}/2 + \kappa_{\mu_3}/2}\,
   C_{\mu_1 \mu_2 \mu_3}.
\end{equation}

One collects these into the following element of $\Lambda^{\otimes 3}$: 

\begin{align*}
Z^{(f_1, f_2, f_3)} 
  = \sum_{\mu_1, \mu_2, \mu_3}
    C_{\mu_1 \mu_2 \mu_3}^{(f_1, f_2, f_3)}\,
    s_{\mu_1} \otimes s_{\mu_2} \otimes s_{\mu_3}.
\end{align*}



The building block of the topological vertex is \(\bC^3\) with three branes, and the FTCY graph is illustrated in Figure~\ref{fig:FTCY of C3}. 
An Aganagic–Vafa brane attached to \(e_i\) with framing \(f\) is represented by the vector
\[
e_{i+1} - f e_i
\]
called a \emph{leg}. Here the subscript is understood modulo~\(3\).

\begin{figure}
\centering
\begin{tikzpicture}[line width = 1pt]
    \draw[-Stealth] (0, 0) -- (-1.5, 0);
    \draw[-Stealth]  (0, 0) -- (0, 1.5); 
    \draw[-Stealth] (0, 0) -- (1.5, -1.5);

    \node[right] at (0, 1.5) {\(e_3\)};
    \node[right] at (1.5, -1.5) {\(e_1\)};
    \node[left] at (-1.5, 0) {\(e_2\)};
 
    \draw[Blue, -Stealth] (-.8, 0) -- (-.8, 1.5); 
    \draw[Blue, -Stealth] (1, -1) -- (1, -2.5);
    \draw[Blue, -Stealth] (0, .8) -- (1.5, .8);
\end{tikzpicture}

\caption{The FTCY of \(\bC^3\). The three legs (drawn in blue) attached to \(e_1\), \(e_2\), and \(e_3\) are of framing~\(-1\), \(0\), and \(-1\), respectively.}
\label{fig:FTCY of C3}

\end{figure}
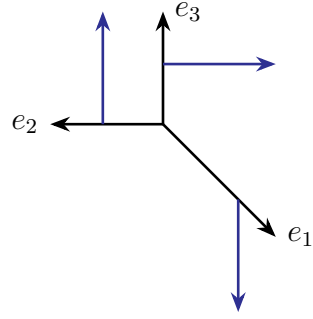

More generally, given any FTCY $X$ with some collection $\mathcal{L}$ of framed Aganagic-Vafa branes, there is a partition function
$$Z_{X, \mathcal{L}} \in \mathbb{Q}(q^{\pm 1/2})[Q_1^\pm, \dots, Q_{n-1}^\pm] \otimes \Lambda^{\otimes \mathcal{L}}.$$ 
It is defined by the following gluing rules.  Decompose the FTCY into vertices, and at each edge place Aganagic-Vafa branes with opposing framings,  as illustrated in Figure~\ref{fig:gluing FTCYs}.
Then evaluate the diagram per the `Feynman rules' where the vertex is the topological vertex, and the propagator for an edge with K\"ahler class $Q$ is: 
\begin{equation}\sum_\lambda  (-1)^{f f' |\lambda|}  Q^{|\lambda|} s_{\lambda}^* \otimes
        s_{\lambda^t}^*
\end{equation}


\begin{figure}
\centering
\begin{tikzpicture}[
    scale = 1,
    every path/.style = {line width = 1pt}
]
    \draw (0,0) -- (-1,1);
    \draw (0,0) -- (1,0);
    \draw (0,0) -- (0,-1);
    \draw[Blue, -Stealth] (1, 0) -- (1, -1);
    \draw[dashed] (1, 0) -- (1.5, 0); 
    \node[below] at (1, -1) {\textcolor{Blue}{\(L\)}}; 
    \node at (-1, -1) {\(X_1\)};
    \begin{scope}[xshift = 2.5cm]
        \draw (0,0) -- (-1,0);
        \draw (0,0) -- (0,1);
        \draw (0,0) -- (1,-1);
        \draw[Blue, -Stealth] (-1, 0) -- (-1, 1); 
    \draw[->] (2, 0) -- (3,0); 
    \node[above] at (-1, 1) {\textcolor{Blue}{\(L'\)}};
    \node at (0, -1) {\(X_2\)};
    \end{scope}

    \begin{scope}[xshift = 8cm]
        \draw (0,0) -- (-1,1);
        \draw (0,0) -- (2,0);
        \draw (0,0) -- (0,-1);
        \draw (2,0) -- (2,1);
        \draw (2,0) -- (3,-1);
        \node[above] at (1, 0) {\(Q\)}; 
    \end{scope}
\end{tikzpicture}
\caption{Gluing vertices.}
\label{fig:gluing FTCYs}
\end{figure}
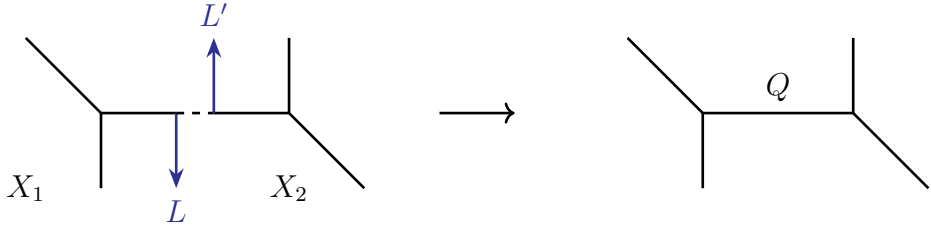

We denote the `open part' by:
\begin{equation*}
    Z^{\mathrm{open}}_{X, L} = Z_{X, L}/Z_{X, \emptyset}. 
\end{equation*}
Note that the degree \(0\) term of \(Z^{\mathrm{open}}\) is always \(1\). 

\subsection{Plethystic exponentials}

 For a \(\lambda\)-ring, denote the plethystic exponentials by 
\begin{align*}
    \Exp{x} &= \exp{\sum_k \frac{1}{k} \psi_k (x)}\\
    \Expp{x} &= \exp{\sum_k \frac{(-1)^{k+1}}{k} \psi_k(x) } 
\end{align*}
where \(\psi_k\) is the \(k\)-th Adams operator. Note that the Plethystic  exponentials satisfy:
\begin{align*}
    \Exp{x + y} &= \Exp{x} \cdot \Exp{y} \\
    \Expp{x+y} & = \Expp{x} \cdot \Expp{y}
\end{align*}
for all elements \(x\) and \(y\) of the \(\lambda\)-ring. 

We will apply this to the $\lambda$-ring $\Lambda$ of symmetric functions.  The Adams operators act by 
$\psi_k(p_n) = p_{kn}$, where $p_k$ are the power sum symmetric functions $p_k := \sum_i x_i^k$.

We later extend scalars to  \(\bQ(q^{\pm 1/2})[[Q_1, \cdots, Q_n]] \), and take \(Q_i\) and \(q^{1/2}\) to be line elements. We also denote the quantum integers by \(\{n\} = q^{n/2}- q^{-n/2}\).

Since we have chosen the framings of \(L_1\) and \(L_2\), their skein modules are canonically identified with \(\Lambda\). 
For a line element \(\xi \in \Lambda\), the skein dilogarithm introduced in Definition~\ref{def : skein dilog} can be rewritten as
\begin{equation}
    \Psi[\xi] = \Exp{\frac{-\xi}{ \{1\}} p_1}
\end{equation}
and we also have
\begin{align*}
    \Psi[\xi]^{-1} &= \Exp{\frac{\xi}{ \{1\}} p_1},   
    &\Psi[-\xi] &= \Expp{ \frac{\xi}{ \{1\} } p_1},  
    &\Psi[-\xi]^{-1} &= \Expp{  \frac{-\xi}{ \{1\}} p_1}.
\end{align*}

We define the following elements of $\Sk(L_1), \Sk(L_2), \Sk(L_1 \sqcup L_2)$

\begin{align}
    \Psi_{\mathrm{disk}}^{ v_k - L_1} &=
    \begin{dcases}
    \Exp{ \frac{\alpha_k}{ \{1\}} p_1^{L_1}},  &\text{if \(v_k\) is of type \(A\)}.  \\
    \Exp{ \frac{-\beta_k}{ \{1\}} p_1^{L_1}},   &\text{if \(v_k\) is of type \(B\)}. 
    \end{dcases}\\
    \Psi_{\mathrm{disk}}^{ v_k - L_2} &=
    \begin{dcases}
    \Expp{  \frac{ Q_{k , n}}{ \{1\}} p_1^{L_2}} , &\text{if \(v_k\) is of type \(A\) and \(v_n\) is of type \(A\).}   \\
        \Expp{  \frac{-Q_{k , n}}{\{1\}} p_1^{L_2}} , &\text{if \(v_k\) is of type \(B\) and \(v_n\) is of type \(A\).}\\
    \Exp{  \frac{-Q_{k , n}}{\{1\}} p_{1}^{L_2}} , &\text{if \(v_k\) is of type \(A\) and \(v_n\) is of type \(B\).}\\
   \Exp{ \frac{Q_{k , n}}{\{1\}} p_1^{L_2}} , &\text{if \(v_k\) is of type \(B\) and \(v_n\) is of type \(B\).}
    \end{dcases} \label{eq:psi v2-L2}\\
    \Psi_{\mathrm{annulus}}^{ L_1 - L_2} &= 
    \begin{dcases}
    \Expp{ Q_{1, n}\, p_1^{L_1} \otimes p_1^{L_2}}, &\text{if \(v_n\) is of type \(A\)}.  \\
    \Exp{ -Q_{1 , n} \,  p^{L_1}_1 \otimes p_1^{L_2}}, & \text{if \(v_n\) is of type \(B\).}
    \end{dcases}
\end{align}
Here the superscript of \(p_1\) indicates which brane it corresponds to.

\subsection{Multiple cover formulas for strip partition functions}

Recall that \(s_\lambda\) forms an \(R\)-linear basis of \(\Lambda\). Denote the dual basis by~\( \{s_\lambda^*\}\). 
Recall the Cauchy identities: 

\begin{align}
    \sum_{\lambda} s_{\lambda} (x) s_{\lambda}(y) &= \prod_{i, j} (1-x_i y_j)^{-1} = \exp{\sum_k \frac{ (-1)^{k+1}}{k} p_k(x) p_k(y)} \label{eq:Cauchy identity 1}\\
    \sum_{\lambda} s_{\lambda} (x) s_{\lambda^t}(y) &= \prod_{i, j} (1+x_i y_j) = \exp{\sum_k \frac{1}{k} p_k(x) p_k(y)} \label{eq:Cauchy identity 2}
\end{align}

One can derive from these the following formulas: 

\begin{lemma}[The gluing formulas]
Let \( \{A_k\}_{k=1}^{\infty}\) and \(\{B_k\}_{k=1}^{\infty}\) be sequences of elements in~\(\Lambda_R\). The following holds (assuming everything is well-defined in the completion of \(\Lambda\)):
\begin{align}
    \left(1 \otimes s_\lambda^* \otimes s_\lambda^* \otimes 1\right) & \left( \exp{\sum_k \frac{1}{k} A_k \otimes p_k} \otimes \exp{\sum_k \frac{1}{k} p_k \otimes B_k}\right) \notag \\
    &= \exp{\sum_k \frac{1}{k}A_k \otimes B_k }, \label{eq:gluing formula 1}\\[10pt]
    \left(\sum 1 \otimes s_\lambda^* \otimes s_{\lambda^t}^* \otimes 1 \right) &\left(\exp{\sum_k \frac{1}{k} A_k \otimes p_k}\otimes \exp{ \sum_k \frac{(-1)^{k+1}}{k} p_k \otimes B_k}\right)\notag \\
    & = \exp{\sum_k \frac{1}{k} A_k \otimes B_k}. \label{eq:gluing formula 2}
\end{align}
\end{lemma}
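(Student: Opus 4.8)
The plan is to prove both identities by reducing them to the Cauchy identities \eqref{eq:Cauchy identity 1} and \eqref{eq:Cauchy identity 2} applied to the ``middle'' tensor factors, after rewriting the plethystic exponentials in terms of Schur functions. First I would unpack notation: the operator $1 \otimes s_\lambda^* \otimes s_\lambda^* \otimes 1$ means we pair the second factor of the first exponential against $s_\lambda$ and the first factor of the second exponential against $s_\lambda$, then sum over all partitions $\lambda$; the result lives in the first factor of the first exponential tensored with the second factor of the second. The key observation is that $\exp\bigl(\sum_k \tfrac1k A_k \otimes p_k\bigr)$, regarded as an element of $\Lambda_R \otimes \Lambda$, is precisely a generating object whose expansion in the second factor is governed by a Cauchy-type kernel: writing $A_k$ formally as a power sum $p_k$ in auxiliary variables, $\exp\bigl(\sum_k \tfrac1k p_k \otimes p_k\bigr) = \sum_\lambda s_\lambda \otimes s_\lambda$ by \eqref{eq:Cauchy identity 2} read with the roles adjusted — actually more directly, $\exp\bigl(\sum_k \tfrac1k p_k(x) p_k(y)\bigr) = \sum_\lambda s_\lambda(x) s_\lambda(y)$ is the dual Cauchy identity, so substituting the $A_k$'s in place of the $p_k(x)$'s gives $\exp\bigl(\sum_k \tfrac1k A_k \otimes p_k\bigr) = \sum_\lambda A[s_\lambda] \otimes s_\lambda$, where $A[s_\lambda]$ denotes the image of $s_\lambda$ under the $\lambda$-ring map sending $p_k \mapsto A_k$.

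Granting that, the first identity \eqref{eq:gluing formula 1} becomes almost immediate: the left-hand side is
\[
\sum_{\lambda,\mu,\nu} \langle s_\mu^*, s_\lambda\rangle \langle s_\nu^*, s_\lambda \rangle \, A[s_\mu] \otimes B[s_\nu] = \sum_\lambda A[s_\lambda] \otimes B[s_\lambda],
\]
using $\langle s_\mu^*, s_\lambda\rangle = \delta_{\mu\lambda}$, and this equals $\sum_\lambda (A\otimes B)[s_\lambda \otimes s_\lambda]$, which by the same dual Cauchy identity (now applied with $A_k \otimes B_k$ in place of $p_k \otimes p_k$ — i.e. using functoriality of plethysm in the tensor product $\lambda$-ring) is $\exp\bigl(\sum_k \tfrac1k A_k \otimes B_k\bigr)$, the right-hand side. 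For the second identity \eqref{eq:gluing formula 2}, the only change is that the second exponential uses $\Expp{\cdot}$, i.e. $\exp\bigl(\sum_k \tfrac{(-1)^{k+1}}{k} p_k \otimes B_k\bigr) = \sum_\lambda (-1)^{?} \dots$; by the other Cauchy identity \eqref{eq:Cauchy identity 1}, $\exp\bigl(\sum_k \tfrac{(-1)^{k+1}}{k} p_k(x) p_k(y)\bigr) = \sum_\lambda s_\lambda(x) s_\lambda(y)$, so the expansion of the second exponential is $\sum_\lambda s_\lambda \otimes B[s_\lambda]$ — wait, I need to be careful with which Cauchy identity gives the transpose. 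The correct bookkeeping: pairing $s_{\lambda^t}^*$ against the $s_\mu$ appearing in $\sum_\mu s_\mu \otimes (\cdots)$ picks out $\mu = \lambda^t$, and the combination of $\Exp{}$ on one side and $\Expp{}$ on the other is exactly engineered so that the transposes match up via $\omega$-duality ($\omega s_\lambda = s_{\lambda^t}$, $\omega p_k = (-1)^{k+1} p_k$), collapsing the sum to $\sum_\lambda A[s_\lambda] \otimes B[s_\lambda] = \exp\bigl(\sum_k \tfrac1k A_k \otimes B_k\bigr)$ once more.

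The main obstacle I anticipate is purely bookkeeping rather than conceptual: getting the signs and the transposition $\lambda \leftrightarrow \lambda^t$ consistent between the two Cauchy identities, the definition of $\Expp{}$ versus $\Exp{}$, and the action of the involution $\omega$ on $\Lambda$. The cleanest way to manage this is to state at the outset the single fact that for any $\lambda$-ring map $\phi\colon \Lambda \to R$ determined by $p_k \mapsto a_k$, one has $\sum_\lambda \phi(s_\lambda)\otimes s_\lambda = \Exp{\sum_k \tfrac1k a_k \otimes p_k}$ and $\sum_\lambda \phi(s_\lambda) \otimes s_{\lambda^t} = \Expp{\sum_k \tfrac1k a_k \otimes p_k}$ — these are exactly \eqref{eq:Cauchy identity 2} and \eqref{eq:Cauchy identity 1} respectively — and then both gluing formulas follow by composing, pairing with the dual basis, and invoking orthonormality of $\{s_\lambda\}$. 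I would present the proof in that order: first the auxiliary expansion lemma, then \eqref{eq:gluing formula 1} in two lines, then \eqref{eq:gluing formula 2} by the same argument with one Cauchy identity swapped for the other.
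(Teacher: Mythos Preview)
Your approach is correct and aligns with the paper's: the paper's printed proof merely cites \cite{Nak24} for \eqref{eq:gluing formula 1} and asserts \eqref{eq:gluing formula 2} is similar, so there is little to compare against directly. Your plan---apply the ring homomorphism $p_k \mapsto A_k$ (resp.\ $p_k \mapsto B_k$) to the Cauchy and dual Cauchy identities to obtain the Schur expansions $\exp\bigl(\sum_k \tfrac1k A_k \otimes p_k\bigr) = \sum_\lambda A[s_\lambda] \otimes s_\lambda$ and $\exp\bigl(\sum_k \tfrac{(-1)^{k+1}}{k} p_k \otimes B_k\bigr) = \sum_\mu s_\mu \otimes B[s_{\mu^t}]$, then pair via orthonormality of the $s_\lambda$---is exactly the intended mechanism, and your proposed ``auxiliary expansion lemma first, then both formulas in two lines'' organization is clean.

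One caution on the bookkeeping you flagged: the exponential forms of \eqref{eq:Cauchy identity 1} and \eqref{eq:Cauchy identity 2} as printed in the paper have the $(-1)^{k+1}$ on the wrong lines (the standard identities are $\sum_\lambda s_\lambda \otimes s_\lambda = \exp\bigl(\sum_k \tfrac1k p_k \otimes p_k\bigr)$ and $\sum_\lambda s_\lambda \otimes s_{\lambda^t} = \exp\bigl(\sum_k \tfrac{(-1)^{k+1}}{k} p_k \otimes p_k\bigr)$, as you in fact wrote). Use the standard versions when you carry out the sign-tracking, and the transposes match up exactly as you anticipated via $\omega$-duality.
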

\begin{proof}
A detailed proof for \eqref{eq:gluing formula 1} can be found in, \cite[Page 22]{Nak24}. The argument of \eqref{eq:gluing formula 2} is similar. 
\end{proof}

Now let us focus on strips. Start with \(\bC^3\) with two legs and framing \((-1, 0)\), and denote the specialization of the topological vertex by:
\begin{align*}
    Z^{(-1, 0)} = \sum_{\mu_1, \mu_2} C_{\mu_1 \mu_2 \emptyset}^{(-1, 0, 0)} \, s_{\mu_1}(x) s_{\mu_2}(y)
\end{align*}

\begin{proposition} \label{prop:GV for two legs}
    We have
    \begin{equation*}
        Z^{(-1, 0)} = \Expp{\frac{1}{\{1\}} p_1(x) } \Exp{\frac{1}{\{1\} } p_1(y)} \Expp{ p_1(x)p_1(y)}
    \end{equation*}
\end{proposition}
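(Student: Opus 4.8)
The plan is to collapse the double sum over $(\mu_1,\mu_2)$ into a single sum over a partition $\eta$ of products of principally specialized skew Schur functions, and then to evaluate that sum with the Cauchy identities \eqref{eq:Cauchy identity 1}--\eqref{eq:Cauchy identity 2}, which are exactly the plethystic-exponential factors on the right-hand side.

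First I would unpack the framed vertex. Since $\mu_3=\emptyset$ and the skew Schur function $s_{\emptyset/\eta}$ equals $\delta_{\eta,\emptyset}$, the definition \eqref{the vertex} collapses to $C_{\mu_1\mu_2\emptyset}=s_{\mu_2}(q^\rho)\,s_{\mu_1}(q^{\mu_2^t+\rho})$; but it is cleaner to invoke the cyclic symmetry $C_{\mu_1\mu_2\mu_3}=C_{\mu_2\mu_3\mu_1}$ of the topological vertex \cite{AKMV,Okounkov-Reshetikhin-Vafa} and write instead $C_{\mu_1\mu_2\emptyset}=C_{\mu_2\emptyset\mu_1}=q^{\kappa_{\mu_1}/2}\sum_{\eta}s_{\mu_2/\eta}(q^\rho)\,s_{\mu_1^t/\eta}(q^\rho)$, again using $s_{\emptyset/\eta}=\delta_{\eta,\emptyset}$. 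The role of the framing $-1$ is exactly that the prefactor $q^{-\kappa_{\mu_1}/2}$ in $C^{(-1,0,0)}_{\mu_1\mu_2\emptyset}$ cancels the $q^{\kappa_{\mu_1}/2}$ produced by the cyclic rotation, so that
\[
C^{(-1,0,0)}_{\mu_1\mu_2\emptyset}=\sum_{\eta}s_{\mu_1^t/\eta}(q^\rho)\,s_{\mu_2/\eta}(q^\rho).
\]

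Next I would substitute this in, interchange the (degree-finite, hence legitimate in the completed ring of symmetric functions) summations, and carry out the $\mu_1$- and $\mu_2$-sums separately using the skew Cauchy identity $\sum_\mu s_{\mu/\eta}(a)\,s_\mu(b)=\bigl(\prod_{i,j}(1-a_ib_j)^{-1}\bigr)\,s_\eta(b)$ together with its image $\sum_\mu s_{\mu^t/\eta}(a)\,s_\mu(b)=\bigl(\prod_{i,j}(1+a_ib_j)\bigr)\,s_{\eta^t}(b)$ under the $\omega$-involution in the $b$-alphabet (followed by re-indexing the summation by the transpose); both are standard consequences of \eqref{eq:Cauchy identity 1}. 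Combined with the principal specialization $p_k(q^\rho)=1/\{k\}$ — which identifies $\prod_{i,j}(1-q^{\rho_i}y_j)^{-1}$ with $\Exp{\frac{p_1(y)}{\{1\}}}$ and $\prod_{i,j}(1+q^{\rho_i}x_j)$ with $\Expp{\frac{p_1(x)}{\{1\}}}$, compatibly with the plethystic rewriting of $\Psi$ around \eqref{eq:explicit-skein-dilog} — this yields
\[
Z^{(-1,0)}=\Expp{\frac{p_1(x)}{\{1\}}}\;\Exp{\frac{p_1(y)}{\{1\}}}\;\sum_{\eta}s_{\eta^t}(x)\,s_\eta(y).
\]
The last sum is again a dual Cauchy identity, $\sum_\eta s_{\eta^t}(x)\,s_\eta(y)=\prod_{i,j}(1+x_iy_j)=\Expp{p_1(x)p_1(y)}$ by \eqref{eq:Cauchy identity 2}, which finishes the proof. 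Equivalently, the last two steps can be phrased as two uses of the gluing formulas \eqref{eq:gluing formula 1}--\eqref{eq:gluing formula 2}: first contract the internal index $\eta$, then the $x$- and $y$-alphabets.

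The only non-formal input is the cyclic symmetry of the topological vertex in the first step; granting it, the rest is bookkeeping with Cauchy-type identities and the definitions of Section~\ref{subsec:top_vertex}. The step I expect to be the main source of error is the middle one: one must keep straight which alphabet carries $q^\rho$, whether $\eta$ or $\eta^t$ appears where, and derive the transposed skew Cauchy identity with the correct signs — a slip there would, for example, interchange an $\mathcal{E}$ with an $\mathcal{E}'$ or drop the $\prod_{i,j}(1+x_iy_j)$ factor.
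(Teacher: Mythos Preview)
Your proof is correct and follows essentially the same route as the paper's: both rewrite the two-leg vertex in the ``$\sum_\eta$'' form (via cyclic symmetry, which you state explicitly whereas the paper invokes it implicitly), and then peel off the three plethystic factors by skew/dual Cauchy identities. The only cosmetic difference is that the paper first passes to $q^{-\rho}$ via the identity $s_{\lambda/\mu}(q^\rho)=(-1)^{|\lambda|-|\mu|}s_{\lambda^t/\mu^t}(q^{-\rho})$ before applying the generalized Cauchy identities, while you stay in the $q^\rho$ alphabet throughout; your version is arguably cleaner and avoids the stray $q^{\kappa_\mu/2}$ typo in the paper's intermediate formula.
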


\begin{proof}
First, by Cauchy identities~\eqref{eq:Cauchy identity 1}\eqref{eq:Cauchy identity 2} we have
\begin{align*}
    \sum_\lambda s_\lambda(q^{\rho}) s_\lambda(x) &= \Exp{ \frac{1}{\{1\}} p_1(x)}\\
    \sum_\lambda s_\lambda(q^{\rho}) s_{\lambda^t}(x) &= \Expp{ \frac{1}{\{1\}} p_1(x)}\\
\end{align*}
Recall the following generalized Cauchy identities (\cite[p.94]{Mac98}):
\begin{align*}
    \sum_\eta s_{\eta/\lambda} (x) s_{\eta/\mu}(y) &= \prod_{i,j} (1-x_i y_j)^{-1} \sum_{\tau} s_{\mu/\tau} (x) s_{\lambda/\tau} (y)\\
    \sum_\eta s_{\eta/\lambda^t}(x) s_{\eta^t/\mu}(y) &= \prod_{i,j}(1+x_i y_j) \sum_{\tau} s_{\mu^t/\tau} (x) s_{\lambda/\tau^t} (y).
\end{align*}
In particular, 
\begin{align}
    \sum_\eta s_{\eta/\mu}(x) s_\eta(y) 
    &= \prod_{i, j \ge 1} (1-x_iy_j )^{-1} \cdot s_\mu(y) \notag\\
    &= \sum_{\nu} s_\nu(x) s_\nu (y) s_\mu(y) \label{eq:generalized_Cauchy_1}\\
    \sum_\eta s_{\eta/\mu}(x) s_{\eta^t}(y) &= \prod_{i, j\ge 1} (1+ x_i y_j)  s_{\mu^t}(y) \notag\\
    &= \sum_{\nu} s_\nu(x) s_{\nu^t} (y) s_{\mu^t}(y) \label{eq:generalized_Cauchy_2}
\end{align}

By ~\eqref{the vertex} and \eqref{eq:vertex_with_framing}, we have:
\begin{align*}
C_{\mu, \nu, \emptyset}^{-1, 0, 0} &=  (-1)^{|\mu| + |\nu|} q^{\frac{\kappa_\mu }{2} } \sum_\eta s_{\mu/\eta} (q^{-\rho} ) s_{\nu^t/\eta} (q^{-\rho})
\end{align*}
and we have the standard identity
\begin{equation}\label{eq:s_rho_transpose}
  s_{\lambda/\mu}(q^\rho) = (-1)^{|\lambda| - |\mu|} s_{\lambda^t/\mu^t} (q^{-\rho}).
\end{equation}

Thus, 
\begin{align*}
    Z^{(-1,0)} &= \sum_{\mu, \nu} (-1)^{|\mu| + |\nu|}  \sum_\eta s_{\mu/\eta} (q^{-\rho} ) s_{\nu^t/\eta} (q^{-\rho}) s_\mu(x) s_\nu (y)\\
    &=\sum_{\mu, \eta} (-1)^{|\mu|}  s_{\mu/\eta}(q^{-\rho}) s_{\mu} (x) \sum_{\nu} (-1)^{|\nu|} s_{\nu^t/\eta}(q^{-\rho}) s_\nu(y)\\
    &\stackrel{\eqref{eq:s_rho_transpose}}{=} \sum_{\mu, \eta} (-1)^{|\mu|}  s_{\mu/\eta}(q^{-\rho}) s_{\mu} (x) (-1)^{|\eta|}  \sum_{\nu} s_{\nu/\eta^t}(q^{\rho}) s_\nu(y)\\
    &\stackrel{\eqref{eq:generalized_Cauchy_2}}{=}\sum_{\mu, \eta} (-1)^{|\mu|}  s_{\mu/\eta}(q^{-\rho}) s_{\mu} (x) (-1)^{|\eta|} s_{\eta^t}(y)  \sum_{\nu} s_{\nu}(q^\rho) s_{\nu}(y)  \\
    &\stackrel{\eqref{eq:s_rho_transpose}}{=} \sum_{\mu, \eta} s_{\mu^t/\eta^t}(q^\rho) s_{\mu}(x) s_{\eta^t}(y) \Exp{\frac{1}{\{1\} } p_1(y)} \\
    &= \sum_{\mu, \eta} s_{\mu^t} (q^\rho) s_{\mu}(x) s_{\eta}(x) s_{\eta^t} (y) \Exp{\frac{1}{\{1\} } p_1(y)} \\
    &= \Expp{\frac{1}{\{1\}} p_1(x) } \Expp{ p_1(x)p_1(y)} \Exp{\frac{1}{\{1\} } p_1(y)} 
\end{align*}
\end{proof}

\begin{theorem} \label{thm: topological vertex for strips}
For a strip $X$ with (our choice of) two legs $L_1, L_2$:
\begin{equation}\label{eq:partition for two branes}
    Z^{\mathrm{open}}_{X, L_1 \cup L_2}  = \Psi_{\mathrm{annulus}}^{L_1 - L_2}  \cdot \prod_{k = 1}^n \Psi_{\mathrm{disk}}^{v_k - L_1} \prod_{k=1}^n \Psi_{\mathrm{disk}}^{v_k - L_2} 
\end{equation}
\end{theorem}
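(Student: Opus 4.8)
The plan is to prove the identity by induction on the number $n$ of vertices of the strip, assembling $X$ one $\bC^3$-chart at a time and tracking how the open partition function transforms under each gluing. The base case $n=1$ is a single $\bC^3$ carrying the two legs $L_1,L_2$: there are no K\"ahler parameters, $\alpha_1=Q_{1,1}=1$ and $Q_{1,n}=Q_{1,1}=1$, and the asserted formula reduces — after rewriting the $\Psi$'s as plethystic exponentials via the dictionary of Section~\ref{subsec:top_vertex} — to Proposition~\ref{prop:GV for two legs}. (One could instead start from the closed-form strip partition functions of \cite{IKP, PS19} and convert them using \eqref{eq:Cauchy identity 1}--\eqref{eq:Cauchy identity 2}, but the inductive route is cleaner because it stays in plethystic-exponential form throughout.)

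\textbf{Inductive step.} Write $X=X'\cup_e\bC^3$, where $X'$ is the strip on $v_1,\dots,v_{n-1}$, the new chart supplies $v_n$ carrying $L_2$, and $e$ is the new internal edge with K\"ahler class $Q_{n-1}$. By the topological vertex gluing rules, $Z_{X,L_1\cup L_2}$ is the contraction of $Z_{X',L_1\cup L}$ — where $L$ is an auxiliary brane on the rightmost leg of $X'$, the one that becomes $e$ — against $Z_{\bC^3,L\cup L_2}$ through the propagator $\sum_\mu(-1)^{ff'|\mu|}Q_{n-1}^{|\mu|}\,s_\mu^\ast\otimes s_{\mu^t}^\ast$. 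The inductive hypothesis writes $Z^{\mathrm{open}}_{X',L_1\cup L}$ as a product of skein dilogarithm factors in which $p_1^L$ occurs only linearly — in the disk factors $\Psi_{\mathrm{disk}}^{v_k-L}$ and in the single annulus factor $\Psi_{\mathrm{annulus}}^{L_1-L}$ — while a framed version of Proposition~\ref{prop:GV for two legs}, applied at $v_n$, writes $Z^{\mathrm{open}}_{\bC^3,L\cup L_2}$ as a plethystic exponential of $\frac{1}{\{1\}}p_1^L$, one of $\frac{1}{\{1\}}p_1^{L_2}$, and one of $\pm\,p_1^L\otimes p_1^{L_2}$. Hence the $L$-dependence on both sides of the propagator is a product of factors $\Exp{\cdot}$ and $\Expp{\cdot}$ linear in $p_1^L$, and the contraction is performed factor by factor using the gluing formulas \eqref{eq:gluing formula 1}, \eqref{eq:gluing formula 2} (and the sign-twisted variant coming from \eqref{eq:Cauchy identity 1} in the same way), with the propagator's $Q_{n-1}^{|\mu|}$ absorbed into a line element. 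One reads off that $\Psi_{\mathrm{disk}}^{v_k-L}$ paired with the $p_1^L\otimes p_1^{L_2}$ cross-term yields $\Psi_{\mathrm{disk}}^{v_k-L_2}$ of $X$, its K\"ahler monomial acquiring a factor $Q_{n-1}$ from the propagator (so $Q_{k,n-1}$ becomes $Q_{k,n}$); $\Psi_{\mathrm{annulus}}^{L_1-L}$ paired with that cross-term yields $\Psi_{\mathrm{annulus}}^{L_1-L_2}$, and paired with the $\frac{1}{\{1\}}p_1^L$ factor yields $\Psi_{\mathrm{disk}}^{v_n-L_1}$; the $\frac{1}{\{1\}}p_1^{L_2}$ factor passes through as $\Psi_{\mathrm{disk}}^{v_n-L_2}$, the $\Psi_{\mathrm{disk}}^{v_k-L_1}$ of $X'$ pass through unchanged, and the remaining pairings produce brane-free factors, which constitute $Z_{X,\emptyset}$ and are removed upon dividing.

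\textbf{Signs, normalization, and the obstacle.} What remains is the sign bookkeeping and the variable matching. The choice, in each factor on the right, of $\Exp{\cdot}$ versus $\Expp{\cdot}$ is prescribed by \eqref{eq:psi v2-L2} and the definitions of $\Psi_{\mathrm{disk}}^{v_k-L_i}$ and $\Psi_{\mathrm{annulus}}^{L_1-L_2}$, which alternate with the types ($A$ or $B$) of the vertices involved; on the left those alternations must be produced by the framing exponents $(-1)^{ff'|\mu|}$ in the propagator, by the transpose identity \eqref{eq:s_rho_transpose}, and — as in the proof of Proposition~\ref{prop:GV for two legs} — by the framing corrections at $v_n$. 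Once the signs agree, the homology bookkeeping of the vertex (the K\"ahler monomial along the path from $v_1$ to $v_i$ being $\alpha_i=Q_{1,i}$ or $\beta_i=Q_{1,i}$, and that from $v_k$ to $v_n$ being $Q_{k,n}$) completes the identification of variables. I expect essentially all the genuine difficulty to lie in this last part: fixing the framing of $v_n$ and of the auxiliary brane $L$ so that they agree with the conventions of Figures~\ref{fig:strip} and \ref{fig:FTCY of C3}, and then checking that the signs conspire to give $\Exp{\cdot}$ exactly where the statement wants it and $\Expp{\cdot}$ exactly where it wants that, for every combination of vertex types; the plethystic algebra, once the gluing formulas are in hand, is mechanical.
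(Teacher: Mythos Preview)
Your proposal is correct and follows essentially the same route as the paper: induction on the number of vertices, with the base case supplied by Proposition~\ref{prop:GV for two legs}, and the inductive step carried out by writing both sides of the propagator as plethystic exponentials linear in the auxiliary-brane variable and contracting via the gluing formulas \eqref{eq:gluing formula 1}--\eqref{eq:gluing formula 2}. The paper writes out only the case where both adjacent vertices are of type $A$ and declares the remaining three similar; your identification of which factor pairs with which (annulus $\times$ cross-term $\to$ new annulus, annulus $\times$ $\tfrac{1}{\{1\}}p_1^L$ $\to$ new disk $\Psi_{\mathrm{disk}}^{v_n-L_1}$, old disks $\times$ cross-term $\to$ $\Psi_{\mathrm{disk}}^{v_k-L_2}$, and the rest into the closed contribution) matches the paper's explicit computation, and you are right that the residual work is sign and framing bookkeeping across the four type cases.
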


\begin{proof}
   We prove \eqref{eq:partition for two branes} by induction. 
 Proposition~\ref{prop:GV for two legs} is the case of \(1\) vertex. Suppose \eqref{eq:partition for two branes} is true for any strips with \(n\) vertices. Let us consider a strip with \(n+1\) vertices \(X\), which can be obtained by gluing a \(\bC^3\) to a strip with \(n\) vertex \(X'\), as in Figure~\ref{fig:induction on strips}. 
    
    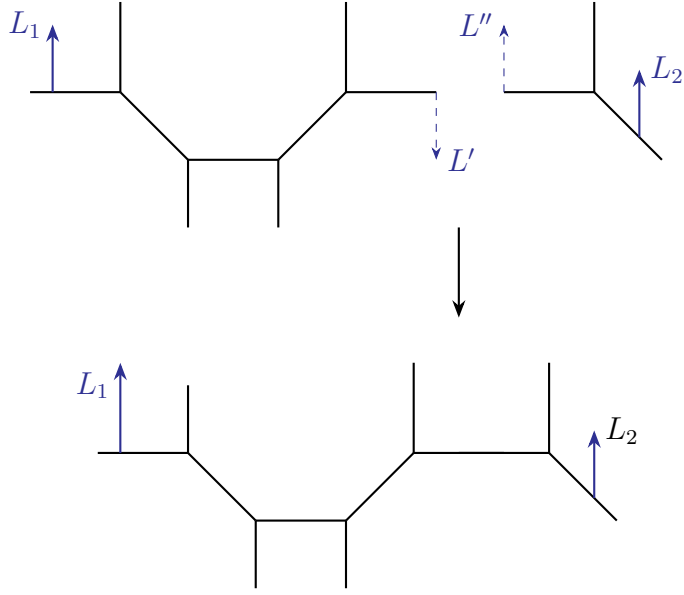
\begin{figure}
        \centering
        \begin{tikzpicture}[scale = .6]
        \begin{scope}[yscale = -1, shift = { (-1.5, 0)}]
            \draw[thick] (0,0) -- (2, 0);
            \draw[thick] (0,0) -- (-1.5, 1.5);
            \draw[thick] (0,0) -- (0, -2);
            \draw[dashed, Blue, -Stealth] (2, 0) -- (2, 1.5); 
            \draw[thick] (-1.5, 1.5) -- (-3.5, 1.5);
            \draw[thick] (-1.5, 1.5) -- (-1.5, 3);
            \draw[thick] (-3.5, 1.5) -- (-3.5, 3);
            \draw[thick] (-3.5, 1.5) -- (-5, 0); 
            \draw[thick] (-5, 0) -- (-7, 0);
            \draw[thick] (-5, 0) -- (-5, -2); 
            \draw[thick, Blue, -Stealth] (-6.5, 0) -- (-6.5, -1.5);
            \node[left] at (-6.5, -1.5) {\color{Blue} \(L_1\) } ;
            
            \node[right] at (2, 1.5) {\color{Blue} \(L'\) } ;

            \begin{scope}[shift = {(5.5, 0)}]
            \draw[thick] (0, 0) -- (-2, 0); 
            \draw[thick] (0, 0) -- (0, -2);
            \draw[thick] (0, 0) -- (1.5, 1.5); 
            \draw[dashed, Blue, -Stealth] (-2, 0) -- (-2, -1.5);
            \node[left] at (-2, -1.5) {\color{Blue} \(L''\) } ;

            \draw[thick, Blue, -Stealth] (1, 1) -- (1, -.5);
            \node[right] at (1, -.5) {\color{Blue}\(L_2\)};

            \end{scope}
        \end{scope}

            \draw[thick, -Stealth] (1, -3) -- (1, -5);

            \begin{scope}[shift = {(0, -8)}, yscale = -1]
                   \draw[thick] (0,0) -- (2, 0);
            \draw[thick] (0,0) -- (-1.5, 1.5);
            \draw[thick] (0,0) -- (0, -2);
            
            \draw[thick] (-1.5, 1.5) -- (-3.5, 1.5);
            \draw[thick] (-1.5, 1.5) -- (-1.5, 3);
            \draw[thick] (-3.5, 1.5) -- (-3.5, 3);
            \draw[thick] (-3.5, 1.5) -- (-5, 0); 

            \draw[thick] (-5, 0) -- (-7, 0);
            \draw[thick] (-5, 0) -- (-5, -1.5); 
            \draw[thick, Blue, -Stealth] (-6.5, 0) -- (-6.5, -2);
            \node[left] at (-6.5, -1.5) {\color{Blue} \(L_1\) } ;

            \begin{scope}[shift = {(3, 0)}]
            \draw[thick] (0, 0) -- (-2, 0); 
            \draw[thick] (0, 0) -- (0, -2);
            \draw[thick] (0, 0) -- (1.5, 1.5); 
            \draw[thick, Blue, -Stealth] (1, 1) -- (1, -.5);
            \node[right] at (1, -.5) {\(L_2\)};

            \end{scope}
            \end{scope}
        \end{tikzpicture}
        \caption{Adding a new vertex to a strip}
        \label{fig:induction on strips}
    \end{figure}

    We compute the case when the vertex of adjacent to \(L'\) the vertex adjacent to \(L''\) are both of type \(A\). The other (three) cases are similar. 
By the gluing rule for the topological vertex and the induction hypothesis, we have
\begin{align}
    Z^{\mathrm{open}}_{X, L_1 \cup L_2}  &= \left( \sum_\alpha 1 \otimes s_{\alpha}^* \otimes  Q^{|\alpha|}   s_{\alpha^t}^* \otimes 1 \right) \cdot \left(
    Z^{\mathrm{open}}_{X', L_1 \cup L'}
    \otimes Z^{(-1,0)} \right)  \label{eq:the partion function after gluing}\\
     & = \left( \sum_\alpha 1 \otimes s_{\alpha}^* \otimes  Q^{|\alpha|}   s_{\alpha^t}^* \otimes 1 \right) \cdot \notag \\
     &    \left( Z_{\emptyset}\Psi_{\mathrm{annulus}}^{L_1 - L'}  \cdot \prod_{k = 1}^n \Psi_{\mathrm{disk}}^{v_k - L_1} \prod_{k=1}^n \Psi_{\mathrm{disk}}^{v_k - L'} \otimes 
    \Exp{\frac{p_1^{L''}}{\{1\}}} \Expp{ p_1^{L''} \otimes p_1^{L_2}  } \Psi_{\mathrm{disk}}^{v_{n+1} - L_2} 
    \right) 
    \notag
\end{align}
where \(Z_\emptyset\) is some polynomial living in \(\bQ(q^{\pm 1/2})[[Q_1, \dots, Q_{n-1}]]\), as defined before. 
Note that the only terms that will change under the gluing are those involves \(L'\) and \(L''\).
Hence we consider:
\begin{align}
    &\left( 
        \sum_\alpha 1 \otimes s_{\alpha}^* \otimes Q^{|\alpha|} s_{\alpha^t}^* \otimes 1 
    \right)
    \cdot 
    \left( 
        \psi_{\mathrm{annulus}}^{L_1 - L'} 
        \prod_{k=1}^n \Psi_{\mathrm{disk}}^{v_k - L'} 
        \otimes 
        \Exp{\frac{P^{L''}_{0,1}}{\{1\}}} 
        \Expp{ p_1^{L''} \otimes p_1^{L_2} }  
    \right) 
    \notag \\
    &= \left( 
        \sum_\alpha 1 \otimes s_{\alpha}^* \otimes Q^{|\alpha|} s_{\alpha^t}^* \otimes 1
    \right) 
    \cdot 
    \notag \\
    &\exp{
        \sum_l \frac{1}{l}
        \left(  
            (-1)^{l+1} Q_{1,n}^l p_1^{L_1} 
            + \sum_k \sum_l \pm \frac{1}{\{l\}} Q_{k,n}^l 
        \right) 
        \otimes p_l^{L'} 
    } 
    \otimes  
    \notag \\
    &\exp{
        \sum_l \frac{1}{l} \, p_l^{L''} \otimes 
        \left(
            \frac{1}{\{l\}} + (-1)^{l+1} p_l^{L_2}  
        \right)
    } 
    \notag \\
    &= (\text{use the gluing formula \eqref{eq:gluing formula 2}}) 
    \notag \\
    &\exp{ 
        \sum_l \frac{Q_{n,n+1}^l}{l}   
        \left(  
            Q_{1,n}^l\, p_l^{L_1} 
            + \sum_k \sum_l \pm \frac{1}{l \{l\}} Q_{k,n}^l 
        \right)
        \otimes 
        \left(
            \frac{1}{\{l\}} + (-1)^{l+1} p_l^{L_2}  
        \right)
    } 
    \notag \\
    &= C \, \exp{
        \sum_l \frac{1}{l} 
        \left(
            \frac{1}{\{l\}} Q_{1,n+1}^l \, p_l^{L_1} 
            + (-1)^{l+1} Q_{1,n+1}^l\, p_l^{L_1} \otimes p_l^{L_2}  
            + \sum_k \pm \frac{1}{\{l\}} Q_{k,n+1}^l\, p_l^{L_2}
        \right)
    } 
    \notag \\
    &= C \, 
    \Psi_{\mathrm{annulus}}^{L_1 - L_2} \, 
    \Psi_{\mathrm{disk}}^{v_{n+1} - L_1} \, 
    \prod_{k=1}^{n+1} \Psi_{\mathrm{disk}}^{v_k - L_2}. 
    \notag
\end{align}
Here $C \in \bQ(q^{\pm 1/2})[[Q_1^\pm, \cdots, Q_n^\pm]]$ is the contribution of closed curves, obtained by gluing the disks from \(v_k\) to \(L'\) and the disks from  \(v_{n+1}\) to \(L''\). The sign of the term~\(  Q_{k, n+1}^l \, p_l^{L_2} \) depends of the type of vertex \(v_k\), following the rule in \eqref{eq:psi v2-L2}.
In fact, this term does not matter much for our purpose, but one can check the signs do match up. 

    In conclusion, we get 
    \begin{align*}
    Z^{\mathrm{open}}_{X, L_1 \cup L_2}& =\Psi_{\mathrm{annulus} }^{L_1 - L_2} \prod_{k = 1}^{n+1}\Psi_{\mathrm{disk}}^{v_k - L_1} \prod_{k=1}^{n+1} \Psi_{\mathrm{disk}}^{v_k - L_2} 
    \end{align*}
\end{proof}

\begin{remark}
    Comparing Theorems \ref{solving the recursion}, \ref{thm:counting at infinity} on the one hand with Theorem \ref{thm: topological vertex for strips} on the other that  for CY3 strips with (our choice of) one Aganagic-Vafa brane, the skein-valued curve count agrees with the formula derived from the topological vertex. (We note this does not yet follow formally from \cite{ELS}, because the necessary gluing formula has not yet been established.)
\end{remark}

\bibliographystyle{plain}
\bibliography{refs}

\end{document}